\newtheorem{theorem}{Theorem}[section]
\newtheorem{remark}[theorem]{Remark}
\newtheorem{proposition}[theorem]{Proposition}
\newtheorem{lemma}[theorem]{Lemma}
\newtheorem{definition}[theorem]{Definition}
\newtheorem{corollary}[theorem]{Corollary}
\def\n{\noindent}
\def\fr{\frac}
\def\Om{\Omega}
\def\pa{\partial}
\def\de{\delta}
\def\ve{\varepsilon}
\def\psh{plurisubharmonic}
\def\va{\phi}
\def\la{\lambda} 
\def\C{\mathbb C}
\def\c{\mathcal C}
\def\ov{\overline}
\def\R{\mathbb R}
\def\al{\alpha}
\begin{document}
	\setlength{\baselineskip}{18truept}
	\pagestyle{myheadings}
	\markboth{ N.Q. Dieu, T.V. Long and T.D. Hieu}{Dirichlet problem for plurisubharmonic functions}
	\title {Dirichlet problem for plurisubharmonic functions\\
		on bounded quasi B-regular domains in $\C^n$}
	\author{
		N.Q. Dieu, T.V. Long and T.D. Hieu
		\\Department of Mathematics, Hanoi National University of Education,\\ Hanoi, Vietnam;
		\\E-mail: ngquang.dieu@hnue.edu.vn\\tvlong@hnue.edu.vn\\tranduchieu1709@gmail.com
	}
	\date{}
	\maketitle
	
	\renewcommand{\thefootnote}{}
	
	\footnote{2010 \emph{Mathematics Subject Classification}: 32U05, 32U15, 31C10, 31C05.}
	
	\footnote{\emph{Key words and phrases}: Dirichlet problem, Plurisubharmonic functions, $B-$regular domains, quasi upper bounded.}
	
	\renewcommand{\thefootnote}{\arabic{footnote}}
	\setcounter{footnote}{0}
	
	{\it Dedicated to Professor Nguyen Van Khue on the occasion of his 85th birthday.}

	\begin{abstract}
		\n
		We investigate existence and uniqueness of maximal plurisubharmonic functions on bounded domains with boundary data that are not assumed to be continuous or bounded. The result is applied to approximate (possibly unbounded from above) plurisubharmonic functions by continuous quasi upper bounded ones. A key step in our approach is to explore
		continuity of the Perron-Bremermann envelope of plurisubharmonic functions that are dominated by a given function  $\phi$ defined on the closure of the domain.
	\end{abstract}
	\section{Introduction}
	Let $\Omega$ be a bounded domain in $\mathbb{C}^n $. By $SH(\Om), PSH(\Omega)$ we denote the cones of 
	subharmonic and plurisubharmonic functions on $\Omega,$ allowing functions to be identically equal to $-\infty.$
	Furthermore, $SH^* (\Om), PSH^*(\Om)$ denote the class of subharmonic and plurisubharmonic functions on $\Om$ that are not
	identically $-\infty.$
	A function $u\in PSH^* (\Omega)$ is said to be {\it maximal} if for every relatively compact open subset $G$ of  $\Omega$ and for every  $v\in PSH(G)$ such that
	$v^* \le u$ on $\partial G$ we have $v\le u$ on $G$. Here $v^*$ denotes the upper regularization of  $v$, i.e
	$$
	v ^*(z):=\limsup_{G \ni \xi \to z} v(\xi), \ z\in \overline G.
	$$
	Similarly, we also define the lower regularization of  $v$ by
	$$v_*(z):=\liminf_{G \ni \xi \to z} v (\xi), \ z\in \overline G.$$
	We use the symbol $MPSH(\Omega)$ to denote the family of all maximal plurisubharmonic functions on  $\Omega$.
	Now let $\phi: \pa \Om \to \mathbb R$ be a boundary data. The classical Dirichlet problem concerns with  the existence and uniqueness of the problem 
	$$\begin{cases}
		u \in MPSH(\Om)\\
		\lim\limits_{z \to x} u(z)=\phi (x) \ \forall x \in \pa \Om.
	\end{cases}$$
	In case, $\Om$ is strictly pseudoconvex, the existence and uniqueness of solution to the above Dirichlet problem has been confirmed in \cite{Br59}. Moreover, in this case, Bedford and Taylor proved in \cite{BT}, that the solution $u$
	also satisfies the homogeneous Monge-Amp\`ere equation $(dd^c u)^n=0$ (understood in a weak sense).
	Later on, the Dirichlet problem was studied throughly in \cite{Si} and then \cite{Blocki}
	when $\Om$ is a general bounded domain in $\C^n.$ 
	This line of research culminates in \cite{Si} with the following basic notion of \(B\)-regularity.
	A domain \(\Omega\subset\mathbb{C}^n\) is called \(B\)-regular if every real-valued continuous
	function \(\phi\) on \(\partial\Omega\) extends to a function \(u\in \mathrm{PSH}(\Omega)\cap C(\overline{\Omega})\)
	such that \(u|_{\partial\Omega}=\va\).
	In particular, \(B\)-regular domains are precisely the domains on which the Dirichlet
	problem for plurisubharmonic functions with continuous boundary data admits a unique (continuous) solution.
A sufficient condition for a domain $\Omega\subset\C^n$ to be $B$-regular is that its boundary $\partial\Omega$ contains no complex variety of positive dimension.
	The reader may consult \cite{DDH} for specific examples of bounded $B-$regular domains.
	When the boundary datum \(\va\) is allowed to take the values \(\pm\infty\), the Dirichlet
	problem becomes subtler. For instance, let \(\Omega=\mathbb D\subset\mathbb C\) be the unit disk and
	let \(\va:\partial\mathbb D\to[-\infty, 0)\) be a continuous function such that
	\(\va \equiv -\infty\) on a set \(E\subset\partial\mathbb D\) of positive measure.
	Then, by the mean–value inequality together with a routine limiting argument, we deduce the non-existence of a harmonic function \(u\) on \(\mathbb D\) with
	\[
	\lim_{z\to x} u(z)=\va (x)\qquad\text{for all }x\in\partial\mathbb D.
	\]
	Thus, it is natural to relax the boundary condition slightly. More precisely, let 
	\(\va: \partial\Omega\to[-\infty,\infty]\) be a boundary datum and consider the following
	generalized Dirichlet problem:
	\[
	\begin{cases}
		u \in \mathrm{MPSH}(\Omega),\\[2mm]
		\displaystyle\lim_{z\to x} u(z)=\va (x)\quad \text{for all } x\in \partial\Omega\setminus E,
	\end{cases}
	\]
	where the exceptional set \(E\subset \partial\Omega\) is \(\Omega\)-pluripolar (see the next section for the precise definition).
	Analogous to the classical Perron envelope in potential theory, the Perron–Bremermann
	envelope is a fundamental tool for solving the Dirichlet problem in pluripotential
	theory. In detail, given boundary data \(\varphi\) on \(\partial\Omega\), we consider the
	Perron–Bremermann envelope
	\[
	P_\va (z)
	:= \sup\bigl\{ u(z) : u \in \mathrm{PSH}(\Omega),
	\ \ u^*|_{\partial\Omega}\le \va \bigr\}, \qquad z\in\Omega.
	\]
	We then study \(P_\va \) by establishing its regularity and its boundary
	behavior. For technical convenience, we work with envelopes of functions defined on
	the closure \(\overline{\Omega}\).
	Given a function $\phi:\overline{\Omega}\to [-\infty,+\infty]$, the Perron-Bremermann envelope of  $\phi$ is defined by
	$$
	P_{\phi}(z) =\sup\{u(z): u\in PSH(\Omega): u^*\le \phi\ \text{on}\ \overline{\Omega}\}.
	$$
	In \cite{Br59} Bremermann proved that $P_\phi$ assumes the boundary value $\phi$ continuously when $\Omega$ is strictly pseudoconvex and $\phi$ is real valued, continuous on $\partial \Omega$.
	It was later shown by Walsh \cite{Wal} that  $P_\phi\in C(\overline{\Omega})$. Thus $P_\phi$ is the unique maximal continuous plurisubharmonic function on $\Om$ that assumes the boundary values $\phi$ on $\Om.$ 
	Sufficient conditions ensuring H\"older continuity of $P_{\phi}$ was also studied recently in \cite{LHL}.
	In the case $\phi$ is discontinuous or  unbounded, the Dirichlet problem becomes more complicated.
	Moreover, in \cite{NiWi}, M. Nilsson and F. Wikstr\"om have shown  that if  $\Omega$ is $B$-regular and $\phi$ is  tame plurisuperharmonic on $\Omega$ such that $\phi^*=\phi_*$ on $\overline{\Omega}$ then $P_\phi$ is the unique maximal plurisubharmonic function
	that is continuous outside a pluripolar set
	with the correct  boundary value $\phi$.
	Moreover, if  $v$ is a non-trivial strong plurisuperharmonic majorant  to $\phi$, M. Nilsson \cite{Ni} showed that  $P_\phi$ is continuous outside the singularities  of  $v$.
	Note that in order to prove the continuity of  $P_\phi,$ the authors in \cite{NiWi} and \cite{Ni}, used  variations of Edwards' duality theorem and the approximation property of $B$-regular domains.
	
	Given the relaxed boundary condition on \(u\), it is reasonable to replace \(B\)-regularity by a slightly weaker boundary condition.
	\begin{definition}
		A bounded regular domain $\Om$ in $\C^n$ is said to be quasi $B-$regular 
		for every continuous function
		$\phi: \pa \Om \to \R$ there exist a $\Om-$pluripolar set $A \subset \pa \Om$ and a quasi upper bounded function
		$u \in PSH(\Om)$ such that 
		$$\lim\limits_{z \to x} u(z)=\phi (x), \ \forall x \in (\pa \Om) \setminus A.$$
	\end{definition}
	\begin{remark} {\rm (i) Let $\Om$ be the worm domain constructed by Diederich and Fornaess (see Proposition 1 in \cite{DF}). Then $\Om$ is a $\c^\infty-$ smooth, bounded pseudoconvex in $\C^2$. Moreover, $\pa \Om$ fails to be strictly pseudoconvex  
			exactly on the disk $\{1 \le |z| \le r\} \times \{0\}.$ It follows that $\Om$ is pseudoconvex, quasi B-regular but not B-regular.
			
			\n 	
			(ii) A priori, the exceptional set $A$ on the definition of a quasi $B-$regular domains depends on $\phi,$ we will show, however, in Theorem \ref{breg} below that $A$ might be chosen to be independent of $\phi.$  We also study continuity of $P_\phi$ on such domains when $\phi$ is real valued continuous on $\ov \Om.$
			}
	\end{remark}
	\begin{theorem} \label{breg}
	Let $\Om$ be a bounded quasi $B-$regular domain in $\C^n.$ 
	Then the following assertions hold true:
	
	\n 
	(a) There exists a $\Om-$pluripolar set $A \subset \pa \Om$ such that for every continuous function
	$\phi: \ov \Om \to \R$ and every $x \in (\pa \Om) \setminus A,$ we have
	\begin{equation} \label{eq000}
		\lim\limits_{z \to x} \tilde P_\phi (z)= \lim\limits_{z \to x} P_\phi (z)=\phi (x),
	\end{equation}
	where
	\begin{equation} \label{eq00}
		\tilde P_{\phi} (z) := \sup \{u (z): u \in PSH(\Om), u^*|_{\pa \Om} \le \phi\}, \ z \in \Om. 
	\end{equation}
	\n 
	(b) There exists a $\Om-$pluripolar set $X \subset \ov \Om$
	such that for every continuous function $\phi: \ov \Om \to \R,$ we have:
	
	\n 
	(i) $P_\phi \in PSH(\Om) \cap L^\infty (\Om);$ 
	
	\n 
	(ii) $P_\phi (z) \to \phi (x)$ as $z \to x$ for all $x \in (\pa \Om) \setminus X$.
	
	\n 
	(iii) $P_\phi$ is continuous at every point in $\Om \setminus X.$ 
\end{theorem}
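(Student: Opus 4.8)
The plan is to derive (b)(i) by soft arguments, to reduce (a) and (b)(ii) to the existence of peak (barrier) functions at the non-exceptional boundary points together with a maximum--principle duality, and to obtain (b)(iii) from these by a mollification--gluing argument; building the peak functions out of the defining data of quasi $B$-regularity will be the main obstacle. For (b)(i), fix a continuous $\phi:\ov\Om\to\R$. Since $\ov\Om$ is compact, the constant $\inf_{\ov\Om}\phi$ is admissible for $P_\phi$, while the maximum principle for subharmonic functions on a bounded domain forces $u\le\sup_{\pa\Om}\phi$ on $\Om$ for every competitor $u$; hence $\inf_{\ov\Om}\phi\le P_\phi\le\sup_{\ov\Om}\phi$, so $P_\phi\in L^\infty(\Om)$. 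Moreover the competing family is locally bounded above, so $(P_\phi)^*\in PSH(\Om)$, and $(P_\phi)^*\le\phi$ on $\ov\Om$ because $P_\phi\le\phi$ on $\Om$ and $\phi$ is upper semicontinuous; thus $(P_\phi)^*$ is itself admissible, whence $(P_\phi)^*\le P_\phi$, i.e. $P_\phi=(P_\phi)^*\in PSH(\Om)$. This part uses neither quasi $B$-regularity nor quasi upper boundedness.

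To set up the exceptional set, fix a countable dense family $\{\phi_j\}\subset C(\pa\Om)$; quasi $B$-regularity gives $\Om$-pluripolar sets $A_j\subset\pa\Om$ and quasi upper bounded $u_j\in PSH(\Om)$ with $\lim_{z\to x}u_j(z)=\phi_j(x)$ for $x\in\pa\Om\setminus A_j$, and after replacing $u_j$ by $\min(u_j,\sup_{\pa\Om}\phi_j)$ I may assume every $u_j$ is bounded above. Choosing for each $j$ a negative defining function $\rho_j\in PSH(\Om)$ of $A_j$, normalising at a fixed point of $\Om$ outside all relevant polar loci, and summing a fast-decaying series, I obtain one $\psi\in PSH(\Om)$, $\psi\le0$, $\psi\not\equiv-\infty$, whose $-\infty$ locus, enlarged by the interior polar singularities of the $u_j$ and by the sets on which these functions fail to possess honest one-sided limits (such sets being $\Om$-pluripolar precisely because the $u_j$ are quasi upper bounded), is an $\Om$-pluripolar set $X\subset\ov\Om$; put $A:=X\cap\pa\Om$. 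The crucial step is then, for each $x_0\in\pa\Om\setminus A$, to construct a \emph{peak function}: $b=b_{x_0}\in PSH(\Om)$ with $b\le0$, $b\not\equiv0$, $\lim_{z\to x_0}b(z)=0$ and $\limsup_{z\to x}b(z)<0$ for all $x\in\pa\Om\setminus\{x_0\}$. I would do this by approximating in $C(\pa\Om)$ the Lipschitz peak datum $x\mapsto-\min(1,|x-x_0|)$ by members $\phi_j$ of the dense family, lowering and renormalising the corresponding $u_j$, and adding a small multiple of $\psi$ to suppress the a priori uncontrolled behaviour of $u_j$ near $A_j$; since $x_0\notin X$ all functions involved have genuine finite limits at $x_0$, and careful bookkeeping of the errors produces $b_{x_0}$.

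Granting the peak functions, (a) and (b)(ii) follow from the classical Dirichlet barrier argument. Fix a continuous $\phi$ and $x_0\in\pa\Om\setminus A$; given $\ve>0$ choose a neighbourhood $U$ of $x_0$ in $\ov\Om$ with $|\phi-\phi(x_0)|<\ve$ on $U$, put $c:=-\sup_{\Om\setminus U}b_{x_0}>0$ and $M:=\sup_{\ov\Om}\phi-\inf_{\ov\Om}\phi$, and verify that
\[
v:=\max\Big(\inf_{\ov\Om}\phi,\ \phi(x_0)-\ve+\tfrac{M}{c}\,b_{x_0}\Big)
\]
satisfies $v^*\le\phi$ on $\ov\Om$, so $v$ is admissible for $P_\phi$ (a fortiori for $\tilde P_\phi$); since $\lim_{z\to x_0}b_{x_0}(z)=0$ this gives $\liminf_{z\to x_0}P_\phi(z)\ge\phi(x_0)-\ve$, and letting $\ve\to0$ yields $\liminf_{z\to x_0}P_\phi(z)\ge\phi(x_0)$ and $\liminf_{z\to x_0}\tilde P_\phi(z)\ge\phi(x_0)$. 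For the matching upper bound, observe that $u^*|_{\pa\Om}\le\phi$ and $w^*|_{\pa\Om}\le-\phi$ imply $(u+w)^*|_{\pa\Om}\le0$, hence $u+w\le0$ on $\Om$ by the maximum principle; taking suprema gives $\tilde P_\phi+\tilde P_{-\phi}\le0$ on $\Om$, so $\limsup_{z\to x_0}\tilde P_\phi(z)\le-\liminf_{z\to x_0}\tilde P_{-\phi}(z)\le\phi(x_0)$ by the lower bound applied to $-\phi$, which is legitimate since $A$ was fixed once and for all independently of the datum. As $P_\phi\le\tilde P_\phi$, the same upper bound holds for $P_\phi$; combining the estimates gives \eqref{eq000}, hence (a), and (b)(ii) is its $P_\phi$-part.

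For (b)(iii) it remains to show $\liminf_{z\to z_0}P_\phi(z)\ge P_\phi(z_0)$ for $z_0\in\Om\setminus X$, the reverse inequality being automatic since $P_\phi\in PSH(\Om)$. Given a near-optimal competitor $u$ for $P_\phi$, I would mollify it on a slightly shrunken subdomain to get a continuous psh minorant $u_\de-\omega(\de)$ of $\phi$ there ($\omega$ a modulus of continuity of $\phi$), push it to $-\infty$ near $A$ by adding $\de'\psi$, and glue it to a slightly lowered copy of $u$ near $\pa\Om\setminus A$ using the peak functions of the previous paragraph; since $z_0$ avoids $\pa\Om$, the set $A$, and the interior part of $X$, the resulting admissible competitor $\tilde u$ is continuous at $z_0$ with $\tilde u(z_0)\ge u(z_0)-\omega(\de)-\de'|\psi(z_0)|$, whence $\liminf_{z\to z_0}P_\phi(z)\ge\tilde u(z_0)$, and letting $u$ run over near-optimal competitors and $\de,\de'\to0$ finishes (b)(iii). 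Alternatively, following Nilsson and Nilsson--Wikstr\"om, one may identify $P_\phi$ on $\Om\setminus X$ with an Edwards-type Jensen-measure envelope and deduce continuity from weak-$*$ compactness of the sets of Jensen measures together with the continuity of $\phi$. The main obstacle throughout is the construction of the peak functions $b_{x_0}$, that is, turning the bare definition of quasi $B$-regularity (a statement about the limit of a single quasi upper bounded function off an a priori uncontrolled pluripolar set) into honest peak functions at every boundary point lying outside one datum-independent $\Om$-pluripolar set $X$; the delicate points are absorbing all the $A_j$ into a single $X$, compensating for the lack of information on $u_j$ near $A_j$, and guaranteeing (this is where quasi upper boundedness enters) that the correction $\psi$ possesses genuine limits at the points of $\ov\Om\setminus X$.
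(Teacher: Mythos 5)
Your plan and the paper's proof share several ingredients (a countable dense family $\{\phi_j\}\subset C(\partial\Omega)$, a union $A=\bigcup A_j$, a potential $\psi$ that is $-\infty$ on $A$, and a barrier-type argument at boundary points off $A$), but the logical structure is different and the proposal has two concrete gaps.

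First, the reduction ``after replacing $u_j$ by $\min(u_j,\sup_{\partial\Omega}\phi_j)$ I may assume every $u_j$ is bounded above'' is incorrect: the minimum of a plurisubharmonic function and a constant is in general \emph{not} plurisubharmonic (already $\min(\mathrm{Re}\,z,0)$ on the disk is strictly superharmonic on $\{\mathrm{Re}\,z=0\}$). The paper handles exactly this point by exploiting the definition of quasi upper boundedness: it takes an \emph{increasing} sequence $u_{j,l}\in PSH(\Omega)\cap L^\infty(\Omega)$ with $u_{j,l}\nearrow u_j$ quasi everywhere, considers the genuinely plurisubharmonic competitors $u_{j,l}+\varepsilon w_j$ (where $w_j^*|_{A_j}\equiv-\infty$), shows $u_{j,l}+\varepsilon w_j\le\tilde P_{\phi_j}$, and then lets $\varepsilon\to 0$, $l\to\infty$. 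If you insist on a single truncated function rather than the increasing family you would have to work with the envelope $\sup\{v\in PSH(\Omega):v\le\min(u_j,c)\}$, and then the boundary behavior of this envelope at points of $\partial\Omega\setminus A_j$ would itself require a proof — which is circular.

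Second, and more substantively, you identify the construction of a strict peak function $b_{x_0}$ (with $\limsup_{z\to x}b_{x_0}(z)<0$ for $x\neq x_0$) as the main obstacle and do not carry it out. The paper avoids attacking this head-on by \emph{reversing the order of inference}: it first proves boundary attainment for $\tilde P_{\phi_j}$ at points of $\partial\Omega\setminus A_j$ directly, by the comparison $u_{j,l}+\varepsilon w_j\le\tilde P_{\phi_j}$ described above, together with the upper bound $\tilde P^*_{\phi_j}|_{\partial\Omega}\le\phi_j$ coming from regularity of $\Omega$ (no peak function needed, and no duality trick needed either, though your $\tilde P_\phi+\tilde P_{-\phi}\le 0$ argument is a perfectly good substitute for the upper bound). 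Uniform approximation then gives the $\tilde P_\phi$-half of \eqref{eq000} for all $\phi\in C(\partial\Omega)$. Only \emph{afterwards} does the paper obtain its peak function, essentially for free: it is $\tilde P_h$ for $h(z)=-|z-x_0|$, whose boundary attainment at $x_0$ is now a consequence of the result just proved, and whose strict negativity at $x\neq x_0$ follows from $\tilde P_h^*\le h$. With $\tilde P_h$ in hand, the barrier argument for $P_\phi$ (applied to $C^1$-smooth $\phi$, then approximated) is exactly the one you sketch. So your barrier argument is right, but you put the peak function at the foundation rather than deriving it as a byproduct, which is what makes it look hard.

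Third, for (b)(iii) your mollify-translate-glue idea is in the right direction but too vague to assess: there is no specification of how the lowered translate is matched with the peak-function correction near $\partial\Omega\setminus A$, nor why the glued competitor is plurisubharmonic across the seam. The paper makes this precise via a proof by contradiction: assuming $P_\phi$ is discontinuous at $z^*\in\Omega\setminus\widehat A$ along $z_j\to z^*$, it establishes (again by a compactness/contradiction argument using the boundary attainment already proved) a key translation inequality $P_\phi(z+\eta)+\lambda v(z+\eta)-\delta\le P_\phi(z)$ on $\partial\Omega_\varepsilon$, which makes the glued function
\[
\Phi(z)=\begin{cases}\max\{P_\phi(z+\eta)+\lambda v(z+\eta)-\delta,\ P_\phi(z)\}&z\in\Omega_\varepsilon,\\ P_\phi(z)&z\in\Omega\setminus\Omega_\varepsilon,\end{cases}
\]
admissible; plugging in $z=z_j$, $\eta=z^*-z_j$ and letting $\lambda\to 0$ produces the contradiction. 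This concrete gluing is the step missing from your sketch, and it also explains why the exceptional set for (b) must be the hull $\widehat A$ (one needs $v(z^*)>-\infty$). Your alternative Edwards-duality route is exactly the one the paper is explicitly trying to avoid.
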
	
\n 
Motivated from Theorem \ref{breg} we have the following
\begin{definition}
Let $\Om\subset\C^n$ be a bounded, quasi $B$-regular domain. The $B$-singular set of $\Om$, denoted $B_\Om$, is the intersection of all $\Om$-pluripolar sets $A$ satisfying property~(a) of Theorem~\ref{breg}.	
\end{definition} 
\n
It will follow from the {\it proof} of Theorem \ref{breg} that the exceptional set $X$ might be chosen to be $\widehat {B_\Om},$ the $\Om-$pluripolar hull of $B_\Om.$
Theorem \ref{breg}, is then used to study continuity of Perron-Bremermann envelopes on quasi $B-$regular domains, and in a slightly more general context
	where the initial function $\phi: \Om \to [-\infty, \infty]$ is {\it nearly continuous}, that is to say, when the exceptional set 
	$$
	E_\phi =\{z\in\overline{\Omega}:  \phi^*(z) \ne \phi_*(z)\}
	$$
	is  $\Omega$-pluripolar.
	\begin{theorem}\label{main2}
		Let $\Omega$ be a bounded  quasi B-regular domain in $\mathbb{C}^n$ and  $\phi: \Om \to [-\infty, \infty]$ be a nearly continuous function.
		Suppose that $\phi^*$ admits a superharmonic majorant $v \not \equiv+\infty$ on $\ov \Om.$
		Then the following assertions hold true:
		
		\n 
		(i) The  envelope   $$
		P_{\phi^*}(z) =\sup\{u(z): u\in PSH(\Omega): u^*\le \phi^*\ \text{on}\ \overline{\Omega}\}
		$$
		is plurisubharmonic on $\Om$.
		
	\n 
	(ii)  If  $\phi$ is bounded from below  by
	$\theta \in PSH^* (\Om)$
	and if $v$ is  a strong plurisuperharmonic majorant of $\phi^*$
	then $P_{\phi^*} \in PSH^* (\Om)$ and is quasi upper bounded on $\Omega$.
	
	\n 
	(iii) If $\theta$ is bounded from below on $\Om$ then 
	$$
	\lim\limits_{\Omega\ni  z\to  z_0}P_{\phi^*}(z)=\phi(z_0),\ \forall z_0\in \partial \Omega\setminus (B_\Om \cup E_\phi).
	$$
	Moreover, $P_{\phi^*}$ is  continuous at every point in the set $\Omega \setminus Y$, where
	$$Y=\{z \in \ov \Om: v^*(z)=+\infty \}\cup \widehat {B_\Om} \cup \widehat {E_\phi}.$$
	\end{theorem}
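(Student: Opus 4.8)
The plan is to deduce all three assertions from Theorem~\ref{breg} together with the standard theory of upper envelopes of $PSH$ families; throughout, $\phi^*,\phi_*$ denote the upper and lower regularizations on $\ov\Om$, which agree off the $\Om$-pluripolar set $E_\phi$. For \emph{(i)}, since $\phi^*\le v\in L^1_{\loc}(\Om)$ the envelope $P_{\phi^*}$ is not identically $+\infty$ on a set of positive measure, so by the Brelot--Cartan theorem $(P_{\phi^*})^*\in PSH(\Om)$ (allowing $-\infty$) and equals $P_{\phi^*}$ off a pluripolar set; since $(P_{\phi^*})^*\le\phi^*$ on $\Om$ (because $u\le u^*\le\phi^*$ for every admissible $u$) and, by upper semicontinuity of $\phi^*$, also $(P_{\phi^*})^*(x)=\limsup_{\Om\ni z\to x}P_{\phi^*}(z)\le\phi^*(x)$ on $\pa\Om$, the function $(P_{\phi^*})^*$ is itself admissible, whence $(P_{\phi^*})^*=P_{\phi^*}\in PSH(\Om)$. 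For \emph{(ii)}, $\phi\ge\theta$ on $\Om$ gives $\theta^*\le\phi^*$ on $\ov\Om$, so $\theta$ is admissible in the definition of $P_{\phi^*}$, hence $P_{\phi^*}\ge\theta$ and $P_{\phi^*}\in PSH^*(\Om)$; and since the singular set of a \emph{strong} plurisuperharmonic majorant $v$ is $\Om$-pluripolar, $P_{\phi^*}\le v$ shows $P_{\phi^*}$ is locally bounded above off that pluripolar set, i.e.\ quasi upper bounded.

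\emph{Boundary values in (iii).} Fix $z_0\in\pa\Om\setminus(B_\Om\cup E_\phi)$. Boundedness of $\theta$ below gives $\phi^*\ge m$ on $\ov\Om$; and $z_0\notin E_\phi$ forces $\phi$ to have a limit at $z_0$, so $\phi^*$ is continuous at $z_0$ with value $\phi(z_0)$. Assume first $\phi^*(z_0)<+\infty$. Given $\ve>0$, choose $\de>0$ with $\phi^*>\phi^*(z_0)-\ve$ on $B(z_0,\de)\cap\ov\Om$, and with $a:=\inf_{\ov\Om}\phi^*\ (\ge m)$ set $\eta(z):=\max\bigl(a-1,\ \phi^*(z_0)-\ve-L|z-z_0|\bigr)$, $L:=(\phi^*(z_0)-\ve-a+1)/\de$; one checks $\eta\in C(\ov\Om)$, $\eta\le\phi^*$ on $\ov\Om$, $\eta(z_0)=\phi^*(z_0)-\ve$. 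Hence $P_\eta\le P_{\phi^*}$, while Theorem~\ref{breg}(a) applies at $z_0\notin B_\Om$ to give $\lim_{z\to z_0}P_\eta(z)=\eta(z_0)$, so $\liminf_{z\to z_0}P_{\phi^*}(z)\ge\phi^*(z_0)-\ve$. Letting $\ve\to0$ and combining with $\limsup_{z\to z_0}P_{\phi^*}(z)\le\phi^*(z_0)$ (from $P_{\phi^*}\le\phi^*$ and upper semicontinuity) yields the limit; the case $\phi^*(z_0)=+\infty$ follows by applying this to $\min(\phi^*,N)$ and letting $N\to\infty$.

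\emph{Interior continuity in (iii).} By (i), $P_{\phi^*}$ is upper semicontinuous on $\Om$, so only lower semicontinuity on $\Om\setminus Y$ remains, and I would obtain it from two reductions. First, $P_{\phi^*}=P_{\phi_*}$ on $\Om\setminus\widehat{E_\phi}$: only ``$\ge$'' is nontrivial; fixing $z_1$ there (so $P_{\phi^*}(z_1)<+\infty$ since $z_1\notin\{v^*=+\infty\}$) and taking a negative psh $w$ on $\Om$ with $w\to-\infty$ along $E_\phi$ but $w(z_1)>-\infty$, every admissible $u$ for $P_{\phi^*}$ produces an admissible $u+\de w$ for $P_{\phi_*}$ (indeed $(u+\de w)^*\le u^*\le\phi^*=\phi_*$ off $E_\phi$ and $(u+\de w)^*=-\infty$ on $E_\phi$), so $P_{\phi_*}(z_1)\ge u(z_1)+\de w(z_1)$; let $\de\to0$ and take the supremum over $u$. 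Second, $P_{\phi_*}$ is lower semicontinuous on $\Om\setminus\widehat{B_\Om}$: writing the lower semicontinuous, bounded-below function $\phi_*$ as an increasing limit of $\psi_j\in C(\ov\Om)$, a compactness argument (the open sets $\{u^*-\psi_j<\ve\}$ exhaust $\ov\Om$, so $u^*-\ve\le\psi_J$ on $\ov\Om$ and hence $u-\ve\le P_{\psi_J}$ for some $J$) gives $P_{\phi_*}=\sup_j P_{\psi_j}$; each $P_{\psi_j}$ is continuous off $\widehat{B_\Om}$ by Theorem~\ref{breg}(b)(iii), so the increasing supremum is lower semicontinuous there. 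Combining, on $\Om\setminus Y$ the function $P_{\phi^*}=P_{\phi_*}$ is both upper and lower semicontinuous, hence continuous.

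The main obstacle is this interior-continuity step, and within it the behaviour at points where $\phi^*$ (equivalently $v^*$) equals $+\infty$: there one must still check that the modified competitors $u+\de w$ respect the boundary constraint and that $\{u^*-\psi_j<\ve\}$ genuinely exhausts $\ov\Om$, which I would handle by first truncating $\phi^*$ against $v^*$ and verifying that this truncation does not change $P_{\phi^*}$ outside $Y$. Arranging that all these pathologies are absorbed precisely into $\{v^*=+\infty\}\cup\widehat{B_\Om}\cup\widehat{E_\phi}$ is the delicate bookkeeping, and is exactly where the hypotheses that $\theta$ is bounded below on $\Om$ and that $v$ is a \emph{strong} plurisuperharmonic majorant are used.
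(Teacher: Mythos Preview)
Your outline is broadly sound, and for (i) and the boundary-value half of (iii) it is more direct than the paper's route (the paper obtains local upper-boundedness in (i) by building harmonic Perron majorants $h_j$ of $\min\{\phi^*,j\}$ and applying the comparison principle of Lemma~\ref{extended maximal}, whereas your $u\le v\in L^1_{\loc}$ together with the sub-mean-value inequality already forces the defining family to be locally uniformly bounded above; you should, however, state that step rather than invoke a non-standard form of Brelot--Cartan).

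There are two genuine gaps.

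\textbf{Part (ii).} Your implication ``$P_{\phi^*}\le v$ with $\{v=+\infty\}$ pluripolar $\Rightarrow$ $P_{\phi^*}$ quasi upper bounded'' is false: the Poisson kernel on $\mathbb D$ is finite everywhere yet fails to be quasi upper bounded (cf.\ the paper's remark after the definition). What is needed is precisely the \emph{strong} majorant hypothesis, via Lemma~\ref{majorant lemma}: for each $\de>0$ the function $P_{\phi^*}-\de v$ is \emph{globally} upper bounded, so that $P_{\phi^*}-\tfrac{1}{j}(v-\inf_{\ov\Om} v)$ is an explicit sequence of upper-bounded PSH functions increasing to $P_{\phi^*}$ off $\{v=+\infty\}$.

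\textbf{Interior continuity in (iii).} Both of your reductions break down at points where $u^*=+\infty$, and such points do occur on $\pa\Om\cap\{\phi^*=+\infty\}$. In your first reduction, if $x\in E_\phi$ with $u^*(x)=+\infty$ and $w^*(x)=-\infty$ the value $(u+\de w)^*(x)$ is indeterminate and can well be $+\infty$ (e.g.\ $u=-\log|1-z|$, $w=\log|1-z|$ on $\mathbb D$ with $\de<1$), so $u+\de w$ need not be admissible for $P_{\phi_*}$. In your second reduction, if $u^*(x)=+\infty$ then $u^*(x)-\psi_j(x)=+\infty$ for every $j$, so $\{u^*-\psi_j<\ve\}$ never covers $x$ and the compactness step fails. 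Truncating $\phi^*$ does not repair this, since it changes the envelope. The paper's remedy is to subtract $\de v$ as well: by Lemma~\ref{majorant lemma}(i) the function $u_\de:=P_{\phi^*}+\de(w-v)$ is \emph{upper bounded} on $\Om$, so $u_\de^*$ is a finite-valued usc function on $\ov\Om$ with $u_\de^*\le\phi_*$; the insertion lemma (Lemma~\ref{insertion}) then supplies a continuous $\phi_\de$ with $u_\de^*\le\phi_\de\le\phi_*$, hence $u_\de\le P_{\phi_\de}\le P_{\phi^*}$, and continuity on $\Om\setminus Y$ follows by letting $\de\to 0$ and invoking Theorem~\ref{breg}(b) for each $P_{\phi_\de}$. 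In short, the strong-majorant hypothesis is exactly what forces the competitors to be bounded above, and the insertion lemma replaces your Dini argument.
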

	\begin{remark} {\rm 
			(a) An appealing direction in (iii) is to drop the lower boundedness assumption on \(\theta\). However, our techniques currently do not accommodate this extension.
			
			\n 
			(b) We wish to emphasize that the second statement in (iii) is a bit stronger than continuity of \(P_{\phi^*}\) on \(\Omega\setminus Y\).
			
			\n 
			(c) For \(B\)-regular domains with \(E_\phi=\emptyset\), Theorem~\ref{main2} was essentially proved by M. Nilsson in \cite{Ni}.  Some key ingredients in his approach are a variant of Edwards' theorem, along with the approximation property specific to 
			$B$-regular domains (in \cite{Wi}). In contrast, our method relies heavily on Theorem \ref{breg} and
			the insertion lemma, which enables us to insert a continuous function between a lower semicontinuous function and an upper semicontinuous one.
			}
	\end{remark}
	
	\n
	Using Theorem \ref{main2} we are able to establish the following approximation result for unnecessarily upper bounded plurisubharmonic functions.
	\begin{theorem} \label{main3}
		Let $\Om$ be a bounded quasi $B-$regular domain and $u$ be a function in $PSH^* (\Om).$ Assume that 
		$((\max \{u, 0\})^\al)^*$ admits a nearly continuous, plurisuperharmonic majorant $\psi \ge 0$ on $\ov \Om$ for some constant $\alpha>1.$
		Suppose also that $Y \cap \Om$ is contained in a closed pluripolar subset $Z$ of $\Om$ where
		$$Y:= \{z \in \ov \Om: \psi^*=+\infty\} \cup \widehat{B_\Om} \cup  \widehat {E_\psi}.$$   
\n
Then there exist a sequence $u_j \in PSH^* (\Om)$
		having the following properties:
		
		\n 
		(i) $u_j$ is quasi upper bounded
		on $\Om;$ 
		
		\n 
		(ii) $u_j$ decreases to $u$ on $\Om;$
		
		\n 
		(iii) $u_j$  is real valued and continuous at every point in $\Om \setminus Z.$ 
	\end{theorem}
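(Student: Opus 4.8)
The plan is to reduce Theorem \ref{main3} to Theorem \ref{main2} by the standard device of approximating $u$ from above by a decreasing sequence built from envelopes of suitably truncated/perturbed data. First I would set $v:=\max\{u,0\}$, so $v\in PSH^*(\Om)$, $v\ge 0$, and by hypothesis $(v^\al)^*$ has a nearly continuous plurisuperharmonic majorant $\psi\ge 0$ on $\ov\Om$ with $E_\psi$ being $\Om$-pluripolar. The idea is to apply Theorem \ref{main2} not to $v$ itself but to the function $\phi_j$ on $\Om$ obtained by a Choquet-type top truncation at level $j$ combined with the strong plurisuperharmonic majorant needed in part (ii): one wants, for each $j$, a function $\phi_j:\Om\to[-\infty,\infty]$ which is nearly continuous, bounded below by some $\theta_j\in PSH^*(\Om)$ that is bounded below on $\Om$, such that $\phi_j^*$ admits a \emph{strong} plurisuperharmonic majorant dominated (up to additive constants) by $\psi$, and such that $\phi_j\downarrow v$. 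The cleanest choice is to work with $v^\al$ rather than $v$: since $\psi$ majorizes $(v^\al)^*$, replacing $\psi$ by $\psi+\ve|z|^2$ (after shrinking $\Om$ slightly and rescaling) makes it a strong plurisuperharmonic majorant; then one sets $\phi_j:=\min\{v^\al,\,$(something tending to $+\infty$ built from $\psi$)$\}$ so that $E_{\phi_j}\subset E_\psi$ and $P_{\phi_j^*}$ is governed by Theorem \ref{main2}.

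Next I would carry out the main construction. For each $j$ let $\psi_j$ be a continuous plurisuperharmonic function (or superharmonic majorant) approximating $\min\{\psi,j\}$ from above with $\psi_j\downarrow \psi$ pointwise; existence of such a decreasing continuous approximation of an upper bounded plurisuperharmonic function on $\ov\Om$ is where quasi $B$-regularity is used, via Theorem \ref{breg} applied to $-\psi$ truncated. Define $\phi_j:=\min\{v^\al,\psi_j\}$; this is nearly continuous because $E_{\phi_j}\subseteq E_{v^\al}\cup E_{\psi_j}$ and $v^\al$ is already plurisubharmonic (so $E_{v^\al}=\emptyset$) while $\psi_j$ is continuous, so in fact $E_{\phi_j}=\emptyset$ — wait, we need $\phi_j$ merely nearly continuous, and indeed it is, with $E_{\phi_j}=\emptyset$. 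Now $\phi_j^*$ admits the strong plurisuperharmonic majorant $\psi_j+\ve_j|z|^2$ after the usual localization; and $\phi_j\ge \theta_j$ for a suitable $\theta_j\in PSH^*(\Om)$ bounded below (take $\theta_j$ a negative constant times a bounded psh exhaustion, or just $0$ if $v^\al\ge 0$ — here $\theta_j=0$ works since $v\ge 0$). Apply Theorem \ref{main2}(ii),(iii) to get $w_j:=P_{\phi_j^*}\in PSH^*(\Om)$, quasi upper bounded, continuous on $\Om\setminus Y_j$ where $Y_j=\{\psi_j^*=+\infty\}\cup\widehat{B_\Om}\cup\widehat{E_{\phi_j}}=\widehat{B_\Om}$ since $\psi_j$ is continuous and finite; then $w_j$ is continuous on $\Om\setminus\widehat{B_\Om}\subseteq\Om\setminus Z$. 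Finally put $u_j:=(w_j)^{1/\al}$ on the set where $w_j>0$, glued with $u$ elsewhere — more precisely $u_j:=\max\{u,\, \mathrm{sgn}\text{-corrected }\al\text{-th root of }w_j\}$ — arranged so that $u_j\downarrow v=\max\{u,0\}$ and hence $\max\{u_j,\text{original negative part data}\}\downarrow u$. One checks $u_j$ is quasi upper bounded (since $w_j$ is and $t\mapsto t^{1/\al}$ is concave increasing, preserving psh-ness and quasi upper boundedness of $\max\{u,\cdot\}$), that $u_j\downarrow u$ (because $\phi_j\downarrow v^\al$ forces $P_{\phi_j^*}\downarrow$ to an envelope pinching $v^\al$), and that continuity of $w_j$ off $\widehat{B_\Om}\subseteq Z$ transfers to $u_j$.

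The step I expect to be the main obstacle is verifying that $u_j$ actually \emph{decreases to} $u$ and not merely to something $\ge u$: we need $\lim_j P_{\phi_j^*}= v^\al$ pointwise on $\Om$ (equivalently, $\inf_j w_j = v^\al$). Since $v^\al\in PSH(\Om)$ and $(v^\al)^*=v^\al$ on $\Om$ while $\phi_j=\min\{v^\al,\psi_j\}$, we certainly have $v^\al\le w_j\le \psi_j^*$-ish, so $v^\al\le \inf_j w_j$; the reverse requires knowing that $v^\al$ itself is a competitor in the definition of $P_{\phi_j^*}$, i.e. $(v^\al)^*\le\phi_j^*$ on $\ov\Om$. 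On $\Om$ this is clear since $(v^\al)^*=v^\al\le\psi$ (as $\psi$ majorizes $(v^\al)^*$) hence $\le\psi_j$ once $\psi_j\ge\psi$ — good — so $(v^\al)^*\le\phi_j$; the delicate point is the boundary $\pa\Om$, where $(v^\al)^*$ may blow up and we only control it by $\psi^*$: we need $(v^\al)^*\le\phi_j^*=\min\{(v^\al)^*,\psi_j^*\}$ on $\ov\Om$, which holds iff $(v^\al)^*\le\psi_j^*$ on $\pa\Om$, and this follows from $\psi_j\ge\psi\ge(v^\al)^*$ together with upper semicontinuity — so in fact $v^\al$ is a competitor and $w_j=v^\al$ exactly on the set where $\psi_j\ge v^\al$, giving $\inf_j w_j=v^\al$. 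The remaining care is purely bookkeeping: choosing the localization constants $\ve_j$ and the continuous majorants $\psi_j$ so that Theorem \ref{main2}'s hypotheses (strong plurisuperharmonic majorant, lower boundedness of $\theta_j$) hold uniformly enough, and confirming that the $\al$-th root operation and the gluing with the negative part of $u$ produce functions in $PSH^*(\Om)$ that are genuinely continuous off $Z$ — this uses $Y_j\cap\Om\subseteq\widehat{B_\Om}\cap\Om\subseteq Z$ by the theorem's hypothesis, and the fact that finite decreasing limits of continuous functions that are also psh and locally bounded are continuous.
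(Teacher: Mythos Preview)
Your proposal has two genuine mathematical errors that break the argument. First, you assert that $E_{v^\alpha}=\emptyset$ because $v^\alpha$ is plurisubharmonic; but plurisubharmonic means only upper semicontinuous, and the discontinuity set of a psh function can be all of $\Omega$ (take $\sum 2^{-j}\log|z-a_j|$ with $\{a_j\}$ dense). Hence $\phi_j=\min\{v^\alpha,\psi_j\}$ need not be nearly continuous, and Theorem~\ref{main2} cannot be invoked. Second, and more fatally, you set $u_j:=(w_j)^{1/\alpha}$ and claim that the concave increasing map $t\mapsto t^{1/\alpha}$ preserves plurisubharmonicity. It does not: convex increasing compositions preserve psh, concave ones preserve plurisuper\-harmonicity. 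For a concrete failure, $u(z)=1+\mathrm{Re}\,z_1$ is pluriharmonic and positive near $0$, but $\sqrt{u}$ restricted to the line $z_1=t$ has negative Laplacian. So $(w_j)^{1/\alpha}$ is in general not in $PSH(\Omega)$, and the sequence you build is not admissible. There are also secondary problems: the existence of continuous plurisuperharmonic $\psi_j\searrow\psi$ is essentially what you are trying to prove (circularity), adding $\varepsilon|z|^2$ destroys plurisuperharmonicity and, being bounded on $\overline\Omega$, does not produce a \emph{strong} majorant in the sense of Definition~\ref{majorant}, and the gluing with the negative part of $u$ is left entirely unspecified.

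The paper's route sidesteps all of this by applying the $\alpha$-th root on the \emph{superharmonic} side, where it is legitimate: since $\psi\ge 0$ is plurisuperharmonic and $t\mapsto t^{1/\alpha}$ is concave increasing, $\psi^{1/\alpha}$ is plurisuperharmonic and majorizes $u^*$ (for $u$ bounded below). Then $u^*-\psi^{1/\alpha}$ is u.s.c.\ and bounded above, so one chooses continuous $f_j\searrow u^*-\psi^{1/\alpha}$ and sets $\phi_j:=f_j+\psi^{1/\alpha}$; now $E_{\phi_j}=E_\psi$ is $\Omega$-pluripolar, and Lemma~\ref{smajorant} shows $\psi-w^*$ is a \emph{strong} plurisuperharmonic majorant of $\phi_j^*$ (the gap $\alpha>1$ is used exactly here). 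One then takes $u_j:=P_{\phi_j^*}$ directly---no root extraction---and Theorem~\ref{main2}(iii) gives the required continuity and quasi upper boundedness. The unbounded-below case is handled afterward by a Forn\ae ss--Wiegerinck diagonalization over $\max\{u,-m\}$, which is the standard device your sketch omits.
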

	\begin{remark} {\rm (a) If one does not require the \(u_j\) to be quasi upper bounded, the conclusion is an immediate consequence of the Forn{\ae}ss–Narasimhan global approximation theorem (\cite{FN}) in the case where \(\Omega\) is pseudoconvex. 
			
\n 
(b) If $u$ is bounded from above on $\Om$ then the existence of $\psi$ is obvious. In this case,
			Theorem~\ref{main2} follows from a combination of Theorem 4.3 and Theorem 3.1 in \cite{DW}, which in turn rests on Edwards’ duality theorem, used crucially in \cite{Wi}.
			
\n 
(c) If $\Omega$ is bounded and $B$-regular, then Theorem 4.1 in \cite{Wi} provides an even stronger result: every \emph{negative} plurisubharmonic function $u$ on $\Omega$ can be approximated from above on $\overline{\Omega}$ by a decreasing sequence ${u_j} \subset PSH(\Omega)\cap C(\overline{\Omega})$, i.e., $u_j^* \searrow u^*$ on $\overline{\Omega}$.
In this case, the method of \cite{Wi} is somewhat more direct than ours, in that the $u_j$ are obtained by
a delicate patching of plurisubharmonic functions constructed from $u$. 
			
\n 
(d) Motivated by the above theorem of Wikstr\"om, a natural question is to what extent a quasi upper bounded plurisubharmonic function on \(\Omega\) can be approximated from above by continuous quasi upper bounded ones.
At present, however, we are not aware of any method to tackle this problem.
		}
	\end{remark}	
	\n
	We end up with the following result on solvability and uniqueness of the
	Dirichlet problem on bounded quasi $B-$regular domains with (possibly) discontinuous boundary values.
	\begin{theorem}\label{main4}
		Let $\Omega$ and  $\phi$ as in Theorem \ref{main2} (iii). Assume, in addition, that $\phi$ is superharmonic on $\Om.$
		Then
		$P_{\phi*}$ is the unique quasi upper bounded solution of the Dirichlet problem
		\begin{equation}\label{Dirichlet}
			\begin{cases}
				u \in MPSH(\Omega);\\
				\inf\limits_{\Om} u>-\infty;\\
				\lim\limits_{\Omega\ni  z\to x}u(z)=\phi(x),\ \forall x \in \pa \Om \ \text{except for a}\ \Om-\text{pluripolar set}
			\end{cases}
		\end{equation}
	\end{theorem}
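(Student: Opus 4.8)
\emph{Proof proposal.} Since $\phi$ is superharmonic it is lower semicontinuous, so $\phi_*=\phi$ on $\Om$; because $E_\phi=\{\phi^*\ne\phi_*\}$ is $\Om$-pluripolar, the functions admissible in $P_{\phi_*}$ and in $P_{\phi^*}$ differ only over $E_\phi$, and I would begin by recording the routine fact that this forces $P_{\phi_*}=P_{\phi^*}=:U$ (two plurisubharmonic functions agreeing off a pluripolar set coincide), so the two readings of the statement agree. By Theorem~\ref{main2} (all of whose hypotheses are assumed here), $U\in PSH^*(\Om)$ is quasi upper bounded, $U\ge\theta\ge\inf_\Om\theta>-\infty$, and $\lim_{z\to x}U(z)=\phi(x)$ for every $x\in\pa\Om\setminus(B_\Om\cup E_\phi)$, with $B_\Om\cup E_\phi$ being $\Om$-pluripolar. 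Hence the only point missing for \emph{existence} is that $U\in MPSH(\Om)$.

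\n
The maximality of $U$ is exactly where I would use the extra hypothesis that $\phi$ is superharmonic. First, $U\le\phi$ on $\Om$: any $u$ admissible in the envelope satisfies $u\le\phi^*=\phi$ off the pluripolar (hence null) set $E_\phi$, and comparing mean values over small balls of the subharmonic function $u$ and the superharmonic function $\phi$ gives $u\le\phi$ everywhere, so $U\le\phi$. Now fix an open $G\Subset\Om$ and $v\in PSH(G)$ with $v^*\le U$ on $\pa G$. Since $\phi$ is superharmonic on $\Om\supset\overline G$, the function $v-\phi$ is subharmonic on $G$ and $\limsup_{z\to x}(v(z)-\phi(z))\le v^*(x)-\phi(x)\le U(x)-\phi(x)\le0$ for all $x\in\pa G$; the maximum principle on the bounded set $G$ then yields $v\le\phi$ on $G$. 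Consequently the function $\tilde v$ equal to $\max(v,U)$ on $G$ and to $U$ on $\Om\setminus G$ is plurisubharmonic on $\Om$ (a standard gluing, using $v^*\le U$ on $\pa G$), satisfies $\tilde v\le\phi\le\phi^*$ on $\Om$, and, coinciding with $U$ near $\pa\Om$, obeys $\tilde v^*|_{\pa\Om}=U^*|_{\pa\Om}\le\phi^*$. Thus $\tilde v$ is admissible in $P_{\phi^*}$, so $\tilde v\le U$, which forces $v\le U$ on $G$. Therefore $U\in MPSH(\Om)$ and $U$ solves \eqref{Dirichlet}.

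\n
For \emph{uniqueness}, let $w$ be any quasi upper bounded solution of \eqref{Dirichlet}, with exceptional ($\Om$-pluripolar) boundary set $A'$, and put $F:=B_\Om\cup E_\phi\cup A'$, still $\Om$-pluripolar. Then $U$ and $w$ are both maximal, quasi upper bounded and bounded below, and by the boundary behaviour already established one has $\limsup_{z\to x}(w(z)-U(z))=0$ and $\limsup_{z\to x}(U(z)-w(z))=0$ for every $x\in\pa\Om\setminus F$. I would then invoke the comparison principle in the class of quasi upper bounded plurisubharmonic functions: if $W\in MPSH(\Om)$ and $u\in PSH(\Om)$ are quasi upper bounded and bounded below and $\limsup_{z\to x}(u(z)-W(z))\le0$ for all $x\in\pa\Om$ outside an $\Om$-pluripolar set, then $u\le W$ on $\Om$. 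Applying this with $(W,u)=(U,w)$ and with $(W,u)=(w,U)$ gives $w\le U$ and $U\le w$; hence $w=U=P_{\phi^*}=P_{\phi_*}$.

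\n
The step I expect to carry the real weight is this comparison principle. The maximality argument above is short, but it is exactly the place where superharmonicity of $\phi$ is used. The comparison principle should follow the classical template: were $\{u>W+\varepsilon\}$ nonempty one wants it to be relatively compact in $\Om$ and then contradicts the local maximality of $W$. The two delicate points are absorbing the $\Om$-pluripolar exceptional set $F\subset\pa\Om$ by subtracting $\delta g$ from $u$, where $g\le0$ is plurisubharmonic on $\Om$ with $g\equiv-\infty$ on $F\cap\Om$ and $g(z)\to-\infty$ as $z\to x$ for $x\in F\cap\pa\Om$, then letting $\delta\downarrow0$ and using once more that a plurisubharmonic function determined off a pluripolar set is determined everywhere; and controlling the possible unboundedness of $\phi$ from above near $\pa\Om$, for which I would lean on the strong plurisuperharmonic majorant $v$ of $\phi^*$, enlarging $F$ to include $\{v=+\infty\}$ and using the growth it imposes on the competing functions. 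A comparison principle of precisely this type must underlie the boundary assertions of Theorem~\ref{main2}, so I would expect it to be available from the preliminary material rather than needing to be proved afresh.
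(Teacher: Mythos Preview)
Your proposal is correct and follows the same architecture as the paper's proof: existence is obtained from Theorem~\ref{main2} together with a maximality argument that hinges on the superharmonicity of $\phi$ (you compare $v$ with $\phi$ on $G$ via the maximum principle and then glue, exactly as the paper does), and uniqueness is deduced from the comparison principle for quasi upper bounded plurisubharmonic functions, which is precisely Lemma~\ref{extended maximal}(i) in the preliminaries. The only cosmetic difference is that you first establish $U\le\phi$ globally via mean values and then invoke the classical maximum principle on $G$ with no exceptional set, whereas the paper allows the exceptional set $E_\phi\cap\partial G$ and disposes of it through Lemma~\ref{extended maximal}(ii); both routes are equally short, and your anticipated comparison principle is exactly the one supplied by Lemma~\ref{extended maximal}.
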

	\n
	We should say that, using Edwards' duality theorem, a similar result was proved in Theorem 1.6 of \cite{DLS} in the case where $\va$ is real-valued and continuous on $\partial \Omega$.
	
	\n
	The paper is organized as follows. In the next section we gather up some notations and auxiliary facts about $\Omega$-pluripolar sets,  functions  having   non-trivial strong majorant and quasi bounded plurisubharmonic functions.
	The most important one is a sort of the comparison principle (Lemma \ref{extended maximal}).
	Section 3 is devoted to proving the main results and presenting examples to which our results apply.
	
	\n
	We conclude this introduction with the following open problem: investigate an analogue of
	Theorem~\ref{main2} in the case where \(\phi\) admits only a (not necessarily strong)
	plurisuperharmonic majorant on \(\Omega\).
	This problem is closely connected to the continuity of the Green function \(G_A\) whose
	poles lie on a complex hypersurface \(A\subset\Omega\) defined by a holomorphic function \(f\).
	Indeed, under mild assumptions on \(f\), one may represent \(G_A\) as \(\log|f|\) plus the
	solution of the Dirichlet problem with boundary datum \(-\log|f|\).
	This question was first raised in \cite{LS} and subsequently studied in certain special
	cases in \cite{Ng}. To date, the continuity of \(G_A\) on \(B\)-regular domains remains
	largely open.
	\section{Preliminaries}
	We recall the following notion which was stated in \cite{Ni} and used implicitly in \cite{DLS}.
	\begin{definition} \label{polar}
		Let $A$ be a subset of $\overline{\Omega}$. Then we say that:
		
		\n 
		(i) $A$ is  $\Omega$-pluripolar  if
		there exists  $w \in PSH^*(\Om), w<0$ such that $ w^*|_{A}=-\infty;$
		
		\n 
		(ii) $A$ is  $\Omega$-polar  if
		there exists  $w \in SH^*(\Om), w<0$ such that $ w^*|_{A}=-\infty;$	
	\end{definition}
	\n
	If $A\subset \partial\Omega$ then  the above notion reduces to that  of b-pluripolarity formulated in \cite{DW}.
	Obviously every pluripolar set in $\Omega$ is $\Omega$-pluripolar.
	It is also known \cite{DW} that  a set  $A\subset \overline{\mathbb{D}}$ where $\mathbb{D}$ is the unit disc in  $\mathbb{C}$ is $\mathbb{D}$-polar if and only if $A\cap \mathbb{D}$ is polar and  $ A\cap \partial \mathbb{D}$
	has arc length $0$.\\
	It is also important to see how $\Om-$pluripolar sets may propagate.
	\begin{definition} \label{hull}
		Let $A \subset \ov{\Om}$ be a $\Om-$pluripolar set. Then the $\Om-$pluripolar hull of $A$ is defined as	
		$$\hat {A}=\bigcap \{z\in\overline{\Omega}: w^*(z)=-\infty, w \in PSH^* (\Omega), w<0, w^*|_{A} \equiv -\infty \}.$$
	\end{definition}
	\n
	The following lemmas collect some simple but useful properties of $\Om-$pluripolar and $\Om-$polar sets.
	\begin{lemma} \label{union}
		Suppose that the sets \( \{A_j\} \) are \( \Omega \)-pluripolar. Then their union \( \bigcup A_j \) is \( \Omega \)-pluripolar as well.
	\end{lemma}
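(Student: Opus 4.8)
The plan is to build a single test function for the union out of the test functions for the individual $A_j$ by taking a suitably weighted sum. Implicit in the statement is that the family $\{A_j\}$ is (at most) countable, say indexed by $j\ge 1$; this is necessary, since an uncountable union of $\Om$-pluripolar sets — e.g.\ all of $\Om$ as a union of points — need not be $\Om$-pluripolar. So I would first, for each $j$, invoke Definition~\ref{polar}(i) to fix $w_j\in PSH^*(\Om)$ with $w_j<0$ on $\Om$ and $w_j^*|_{A_j}\equiv -\infty$.

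The next step is a normalization to keep the sum from collapsing to $-\infty$. Since each $w_j$ is plurisubharmonic and not identically $-\infty$, it is locally integrable, so $\{z\in\Om: w_j(z)=-\infty\}$ has Lebesgue measure zero; hence its countable union over $j$ also has measure zero, and we may choose a point $z_0\in\Om$ with $w_j(z_0)\in\R$ for every $j$. Then I would pick constants $c_j>0$ with $c_j|w_j(z_0)|\le 2^{-j}$ and set
$$
w:=\sum_{j\ge 1} c_j\, w_j .
$$
The partial sums form a decreasing sequence of negative plurisubharmonic functions on $\Om$, so $w$ is plurisubharmonic with values in $[-\infty,0)$; moreover $w(z_0)\ge -\sum_{j\ge1}2^{-j}=-1>-\infty$, whence $w\not\equiv-\infty$. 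Thus $w\in PSH^*(\Om)$ and $w<0$ on $\Om$.

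Finally, fix $z\in\bigcup_j A_j$, say $z\in A_k$. Because every summand is negative, $w\le c_k w_k$ on $\Om$, and since the upper regularization is monotone and commutes with multiplication by the positive constant $c_k$,
$$
w^*(z)\ \le\ (c_k w_k)^*(z)\ =\ c_k\, w_k^*(z)\ =\ -\infty .
$$
Hence $w^*|_{\bigcup_j A_j}\equiv -\infty$, so $w$ witnesses that $\bigcup_j A_j$ is $\Om$-pluripolar, as desired.

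The argument is essentially routine; the only point that requires any care is ensuring $w\not\equiv-\infty$, which is exactly why one first locates a point $z_0$ avoiding all the $(-\infty)$-sets of the $w_j$ and then shrinks the weights accordingly. (Equivalently, one could choose the $c_j$ so that $\sum_j c_j\|w_j\|_{L^1(G_m)}<\infty$ along an exhaustion $G_m\Subset\Om$, obtaining $L^1_{\loc}$-convergence of the series and the same conclusion.)
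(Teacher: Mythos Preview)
Your proof is correct and is essentially the same as the paper's: both choose test functions $w_j$ for the $A_j$, a common point $z_0$ avoiding all the $\{w_j=-\infty\}$ sets, and then form $w=\sum_j c_j w_j$ with weights $c_j>0$ satisfying $c_j|w_j(z_0)|\le 2^{-j}$ (the paper takes $c_j=1/(2^j|w_j(z_0)|)$ specifically). Your added remark on the implicit countability assumption and the alternative $L^1_{\loc}$ normalization are fine supplementary comments.
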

	\begin{proof}
		We include a short proof only for the shake of completeness. By Definition \ref{polar}, for each $j$, we can find $u_j \in PSH^* (\Om), u_j<0$ such that 
		$${u_j}^* (x)=-\infty, \ \forall x \in A_j.$$
		Since $u_j^{-1} (-\infty)$ has Lebesgue measure $0$ for every $j,$ we can find $z_0 \in \Om$ such that
		$u_j (z_0)>-\infty$ for all $j.$
		Set $$u(z):=\sum\limits_{j=1}^\infty -\fr1{2^j u_j (z_0)}u_j (z).$$ 
		Since $u(z_0)>-1$ we infer that $u \in PSH^*(\Om)$. Finally, for every $x \in A_j$ we have
		$$u^* (x) \le -\fr1{2^j u_j (z_0)} {u_j}^* (x)= -\infty.$$
		Thus we are done.
	\end{proof}
	\n
	The following notion, which originates from its potential-theoretic counterpart in \cite{Pa}, is coined "quasi bounded quasi everywhere"  in \cite{NiWi}. We change the terminology a bit only for the sake of brevity. 
	\begin{definition}
		A  function $u \in PSH(\Omega)$ is called quasi upper bounded  if
		there exists a sequence $\{u_j \}$ of upper bounded,  plurisubharmonic functions on $\Om$ such that $u_j \nearrow u$ quasi everywhere (i.e., except for a pluripolar set ) on $\Omega$ .
	\end{definition}
	\n
	Similarly, $u \in SH(\Om)$ is said to be quasi upper bounded if $u$ can be approximated monotonically from below
	by a sequence of upper bounded subharmonic functions outside a polar subset of $\Om.$
	
	\n 
	\begin{remark} 
		{\rm (a) Obviously every upper bounded plurisubharmonic function is  quasi upper bounded. On the other hand,
			by Theorem 3.2 in \cite{NiWi},  $\phi(z,w)=(-\log|z|)^\alpha, 0<\alpha<1$ is quasi upper bounded plurisubharmonic in the unit ball  $\mathbb{B}\subset \mathbb{C}^2$ but not bounded.
			
			\n 
			(b)
			More interestingly, the Poisson kernel on the  unit disk $\mathbb D$ is unbounded harmonic
			which is not quasi upper bounded (see Example 1.2 in \cite{NiWi}).
			
			\n 
			(c) A function $u \in PSH(\Om)$ is quasi upper bounded if and only if the sequence
			$$v_j:=\sup \{v(z): v \in PSH (\Om), v \le \min \{u,j\} \}$$ increases to $u$ quasi everywhere on $\Om.$
	Indeed, first observe that $v_j=v_j^* \in PSH(\Om),$ since $\min \{u,j\}$ is upper semicontinuous on $\Om$. 
	Thus, if $v_j \nearrow u$ quasi everywhere on $\Om$ then $u$ is quasi upper bounded. Conversely, if $u$ is quasi upper bounded, then we can find a sequence $u_j$ of upper bounded,  plurisubharmonic functions on $\Om$ such that $u_j \nearrow u$ quasi everywhere. Then, for every $j$ we can find $k(j)$ so large such that $u_j \le \min \{u, k(j)\}$ on $\Om.$ It follows that $u_j \le v_{k(j)} \le u$ on $\Om.$ Hence $v_{k(j)} \nearrow u$ quasi everywhere on $\Om,$
	and so is the whole increasing sequence $\{v_j\}$. 
This argument is implicit in Theorem~4.8 of \cite{NiWi}; we record it here for the reader’s convenience.
}	
	\end{remark}
	\n
	Now we are able to formulate the following version of the comparison principle for quasi upper bounded functions.
	\begin{lemma}\label{extended maximal}
		(i) Let $u \in PSH(\Om)$ be a quasi upper bounded function. Assume that $\va \in MPSH(\Om)$ is a function that satisfies
		$\va_*>-\infty$ on $\pa \Om$ and that
		$$u^*(z_0)\le \va_* (z_0),\ \forall  z_0\in \partial \Omega \setminus A,
		$$
		where $A$ is a $\Omega$-pluripolar  set. Then $u\le \va$ on $\Omega$.
		
		\n 
		(ii) Let $u \in SH(\Om)$ be a quasi upper bounded function. Assume that $\va$ is a harmonic function on $\Om$ that satisfies
		$\va_*>-\infty$ on $\pa \Om$ and that
		$$u^*(z_0)\le \va_* (z_0),\ \forall  z_0\in \partial \Omega \setminus A,
		$$
		where $A$ is a $\Omega$-polar  set. Then $u\le \va$ on $\Omega$.
	\end{lemma}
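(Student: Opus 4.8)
The strategy is to prove (i); part~(ii) is entirely analogous, with $SH$, harmonic and $\Om$-polar in place of $PSH$, maximal and $\Om$-pluripolar, and with the classical maximum principle for subharmonic functions on a bounded domain playing the role of the maximality of $\va$. We may assume $u\not\equiv-\infty$. As recorded in the last remark of this section, the functions
\[
v_j:=\sup\{v\in PSH(\Om):v\le\min\{u,j\}\}
\]
satisfy $v_j=v_j^*\in PSH(\Om)$, $v_j\le j$, $v_j\le u$ (so $v_j^*\le u^*$ on $\ov\Om$), and, since $u$ is quasi upper bounded, $v_j\nearrow u$ quasi everywhere on $\Om$. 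Because a plurisubharmonic function that is dominated quasi everywhere by another plurisubharmonic function is dominated everywhere (a pluripolar set is Lebesgue null, so the two functions share the same mean values over small balls and hence coincide after taking a maximum), it is enough to prove $v_j\le\va$ on $\Om$ for each fixed $j$; then, letting $j\to\infty$ and using this principle once more, we recover $u\le\va$.

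Fix $j$. By Definition~\ref{polar} choose $w\in PSH^*(\Om)$ with $w<0$ and $w^*|_A\equiv-\infty$, and for $\ve>0$ put $g_\ve:=v_j+\ve w\in PSH(\Om)$, which is bounded above by $j$. From $(a+b)^*\le a^*+b^*$ we get, for $z_0\in\pa\Om\setminus A$, $g_\ve^*(z_0)\le v_j^*(z_0)+\ve w^*(z_0)\le u^*(z_0)\le\va_*(z_0)$, while for $z_0\in A$, $g_\ve^*(z_0)\le j+\ve w^*(z_0)=-\infty\le\va_*(z_0)$; hence $g_\ve^*\le\va_*$ on all of $\pa\Om$. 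Granting the exceptional‑set‑free comparison stated below, we obtain $g_\ve\le\va$ on $\Om$ for every $\ve>0$; letting $\ve\to0$ on the set $\{w>-\infty\}$ (whose complement is pluripolar) and applying the domination‑quasi‑everywhere principle again yields $v_j\le\va$, which is what remains to be shown.

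It thus remains to prove the following. \emph{If $g\in PSH(\Om)$ is bounded above, $\va\in MPSH(\Om)$ with $\va_*>-\infty$ on $\pa\Om$, and $g^*\le\va_*$ on $\pa\Om$, then $g\le\va$ on $\Om$.} Fix $\de>0$. For each $\zeta\in\pa\Om$, the definitions of $g^*(\zeta)$ and $\va_*(\zeta)$, together with $g^*(\zeta)\le\va_*(\zeta)$ and the hypothesis $\va_*>-\infty$ on $\pa\Om$, produce a neighbourhood $V_\zeta$ of $\zeta$ on which both $\va>-\infty$ and $g-\va<\de$ (hence $g-\de<\va$). By compactness of $\pa\Om$, finitely many of the $V_\zeta$ cover $\pa\Om$; calling $N$ their union, we have $\va>-\infty$ and $g-\de<\va$ on $N\cap\Om$, and $L:=\Om\setminus N$ is a compact subset of $\Om$. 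Choose a relatively compact open $G$ with $L\subset G\Subset\Om$ and $\pa G\subset\Om\setminus L=N\cap\Om$ (for instance a small metric neighbourhood of $L$). For $\zeta\in\pa G$ the inequality $g-\de<\va$ holds, with $\va$ finite, on a full neighbourhood of $\zeta$, so $(g-\de)^*(\zeta)\le\limsup_{w\to\zeta}\va(w)\le\va(\zeta)$ by upper semicontinuity of $\va$. Applying the maximality of $\va$ on $G$ to the function $g-\de\in PSH(G)$ gives $g-\de\le\va$ on $G$, while $g-\de<\va$ on $\Om\setminus G\subset N\cap\Om$. Therefore $g-\de\le\va$ on $\Om$ for every $\de>0$, i.e.\ $g\le\va$.

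The only delicate point is this last comparison: maximality is assumed only against functions living on relatively compact subdomains, so one must manufacture a subdomain $G$ whose boundary lies entirely in the region where $g$ is already controlled by $\va$. This is exactly where compactness of $\pa\Om$ and the hypothesis $\va_*>-\infty$ on $\pa\Om$ are used, and where one has to be mindful that $\va$ may equal $-\infty$ inside $\Om$; the argument above sidesteps the latter because $\va$ is automatically finite on the boundary collar $N\cap\Om$, so $\{\va=-\infty\}\cap\Om\subset L\subset G$ is swallowed by $G$ and taken care of directly by maximality. In (ii) this step is even simpler: $g-\de$ is subharmonic on the bounded domain $G$, $\va$ is continuous, and $(g-\de)^*\le\va$ on $\pa G$, so $g-\de-\va\le0$ on $G$ by the classical maximum principle for subharmonic functions.
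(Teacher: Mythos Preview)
Your proof is correct and follows essentially the same route as the paper's: reduce to the upper-bounded case via an approximating sequence, absorb the exceptional set $A$ by adding $\ve w$ with $w^*|_A\equiv-\infty$, then use compactness of $\pa\Om$ together with $\va_*>-\infty$ to produce a relatively compact $G\Subset\Om$ on whose boundary the perturbed function is dominated by $\va$, and finally invoke maximality and let $\ve\to0$. The only organizational difference is that the paper runs a single contradiction argument (fixing a bad point $z_0$ and a specific $\de=\tfrac12(u(z_0)-\va(z_0))$), whereas you isolate a clean exceptional-set-free comparison lemma and prove it directly for every $\de>0$; the substance is the same.
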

	\begin{proof} We only prove (i) since the proof of (ii) is similar.
		First we assume that $u$ is (truly) upper bounded on $\Om.$ Since $A$ is $\Omega$-pluripolar,
		there exists $v \in PSH^* (\Omega), v<0$ such that $ v^{*}|_A=-\infty$.
		Suppose that there exists $z_0 \in \Om$ with $u(z_0)> \va (z_0).$ 
		Since $u, \va$ are plurisubharmonic, we can assume that $v(z_0)>-\infty.$
		Fix $\varepsilon >0,$ we set 
		$$u_{\varepsilon}(z)=u (z)+\varepsilon v(z), \; \; z \in \Omega.$$
		We claim that there exists a relatively compact domain $G \subset \Om$ such that $z_0 \in G$ and
		\begin{equation} \label{eq01}
			u_{\varepsilon} \le \va+ {\de} \ \text{on}\ \pa G,
		\end{equation}
		where $\de:= \fr1{2}(u(z_0)-\va(z_0)).$
		If this is not so, then there exists a sequence $z_j \to \pa \Om$ such that
		$$u_{\varepsilon} (z_j) > \va (z_j)+\de.$$
		By compactness, we may	achieve that $z_j \to x_0 \in \pa \Om$. By considering two cases $x_0 \in A$ and 
		$x_0 \in (\pa \Om) \setminus A$ while using the assumption, we obtain contradictions.
		Thus, we may find  a relatively compact domain $G \subset \Om$ with $z_0 \in G$ and satisfies (\ref{eq01}).
		By maximality of $\va$ we deduce that $u_{\varepsilon} \le \va+\de \ \text{on}\  G.$
		So
		$$u(z_0)+\ve v(z_0) \le \va (z_0)+\de.$$
		By letting $\ve \to 0$ we reach a contradiction.
		Thus the lemma is settled in the case $u$ is bounded from above.
		Finally, we deal with the case where $u$ is quasi upper bounded. Let $u_j \subset PSH(\Om)$ be a sequence of upper bounded functions that increases to $u$ except for a pluripolar set $A'.$ Then, since $u_j^* \le \va_*$ on $(\pa \Om) \setminus A$, by the forgoing proof we conclude that $u_j \le \va$ on $\Om.$ By letting $j \to \infty$ we see that $u \le \va$ on $\Om \setminus A'.$ Since $A'$ is pluripolar, this inequality continues to hold on $\Om$ entirely. The desired conclusion follows.
	\end{proof}
\begin{remark} {\rm The hypothesis that \(u\) is quasi upper bounded is essential: 
		if \(u\) is chosen as the Poisson kernel on the unit disk \(\mathbb D\) and \(\phi\equiv 0\), the lemma does not hold.}
\end{remark}	
	\n
	Following \cite{NiWi} (see also \cite{Ni}), we use the next concept, which is central to our work.
	\begin{definition}\label{majorant}{\rm
			Let $f : \ov \Omega \to [-\infty,+\infty]$ be a function. 
			A lower semicontinuous function $\psi: \ov \Om \to (-\infty, \infty]$ is called a {\em strong plurisuperharmonic (resp. superharmonic) majorant} to $f$ if
			$-\psi \in PSH^*(\Om)$ (resp. $-\psi \in SH^* (\Om)),$ 
			$$\{f=+\infty\}\subset \{\psi=+\infty\}$$
			and
			$$
			\frac{\psi (z)}{f(z) } \to +\infty \ \text{ as}\ f (z) \ \to +\infty.
			$$
	}\end{definition}
	\n
	If $f$ is bounded from above on $\overline\Omega$ then obviously every plurisuperharmonic function $\psi$ in $\Omega$ is a strong  majorant to $f$.
The significance of strong plurisubharmonic majorants is made precise in the next lemma.
	\begin{lemma}\label{majorant lemma}
		Let $u \in PSH (\Om)$ be a function such that $u^*$ admits
	a lower semicontinuous function	$\psi: \ov \Om \to (-\infty, \infty]$ as a strong plurisuperharmonic majorant. Then the following assertions hold true:
		
\n 
(i)	For every $\delta>0$,the function $u-\de \psi$ is upper bounded plurisubharmonic on $\Om;$

\n 
(ii) $u$ is quasi upper bounded on $\Omega$.
\end{lemma}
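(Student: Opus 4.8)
The plan is to prove (i) first and then deduce (ii) as a routine consequence. For part (i), fix $\delta>0$ and set $w:=u-\delta\psi$. Since $-\psi\in PSH^*(\Om)$, the function $w$ is plurisubharmonic on $\Om$; the only issue is the upper bound. I would argue that $\limsup_{z\to\partial\Om}w(z)<+\infty$ is not quite the right target (we want a global bound), so instead I argue directly. By the growth condition $\psi(z)/f(z)\to+\infty$ as $f(z)\to+\infty$, applied with $f=u^*$, there is a constant $M>0$ such that $\psi(z)\ge \frac{1}{\delta}u^*(z)$ whenever $u^*(z)\ge M$; here I also use that $\psi\ge$ (something finite) is lower semicontinuous, hence bounded below on the compact set $\ov\Om$, and that on $\{u^*<M\}$ we simply have $u\le u^*<M$. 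Combining the two regimes: on $\{u^*\ge M\}$ we get $w=u-\delta\psi\le u^*-u^*=0$, while on $\{u^*<M\}$ we get $w\le M-\delta\inf_{\ov\Om}\psi$, which is a finite constant. Hence $w$ is bounded above on $\Om$ by $\max\{0,\,M-\delta\inf_{\ov\Om}\psi\}$. One technical point to check carefully is that $\inf_{\ov\Om}\psi>-\infty$: this follows because $\psi$ is lower semicontinuous on the compact set $\ov\Om$ (a lower semicontinuous function on a compact set attains its infimum, which is $>-\infty$ since $\psi$ is real-valued or $+\infty$ valued but never $-\infty$). Also note $\{u^*=+\infty\}\subset\{\psi=+\infty\}$ by the definition of strong majorant, so the product $\delta\psi$ is well-defined as $+\infty$ there and $w=-\infty$ there, causing no trouble for the upper bound.

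For part (ii), I use the characterization of quasi upper boundedness recorded in Remark (c) after the definition. Write $w_\delta:=u-\delta\psi$ for $\delta=1/j$, $j\ge 1$; by (i) each $w_{1/j}$ is upper bounded and plurisubharmonic on $\Om$. I claim $w_{1/j}\nearrow u$ quasi everywhere on $\Om$ as $j\to\infty$. Monotonicity: since $-\psi\le$ (its sup, which could be $0$ or positive — actually $\psi\ge$ something, so $-\psi\le$ something), we need $w_{1/(j+1)}\ge w_{1/j}$, i.e. $-\frac{1}{j+1}\psi\ge-\frac{1}{j}\psi$, i.e. $\frac{1}{j}\psi\ge\frac{1}{j+1}\psi$, which holds precisely when $\psi\ge 0$. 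Since a priori $\psi$ need only be bounded below, I would instead replace $\psi$ by $\psi-\inf_{\ov\Om}\psi\ge 0$, which is still a lower semicontinuous strong plurisuperharmonic majorant of $u^*$ (adding a constant preserves all three defining properties, using that the growth quotient is insensitive to adding a constant to the numerator when $f\to+\infty$); after this normalization the sequence $w_{1/j}$ is indeed nondecreasing in $j$. Pointwise limit: on the set $\{\psi<+\infty\}$ we have $\frac{1}{j}\psi(z)\to 0$, hence $w_{1/j}(z)\to u(z)$; and $\{\psi=+\infty\}$ is pluripolar — because $-\psi\in PSH^*(\Om)$ and $\{-\psi=-\infty\}$ is exactly the $-\infty$ locus of a plurisubharmonic function that is not identically $-\infty$. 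Therefore $w_{1/j}\nearrow u$ off a pluripolar set, which is precisely the definition of $u$ being quasi upper bounded.

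The only mild obstacle is bookkeeping around the sign/normalization of $\psi$ and making sure the two-regime estimate in (i) is airtight when $u^*$ takes the value $+\infty$; both are handled by the observations that $\psi$ is lower semicontinuous on the compact $\ov\Om$ (hence bounded below) and that $\{u^*=+\infty\}\subset\{\psi=+\infty\}$. No deep pluripotential theory is needed beyond the fact that the $-\infty$ set of a function in $PSH^*(\Om)$ is pluripolar, which is classical.
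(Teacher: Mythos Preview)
Your proof is correct and follows essentially the same route as the paper: for (i) you use lower semicontinuity of $\psi$ on the compact $\overline\Omega$ to get a finite infimum and then the growth condition $\psi/u^*\to+\infty$ to split into two regimes and obtain a uniform upper bound for $u-\delta\psi$; for (ii) you normalize $\psi$ by subtracting its infimum and take $u-\frac{1}{j}(\psi-\inf_{\overline\Omega}\psi)$ as the increasing sequence of upper bounded plurisubharmonic functions, exactly as the paper does.
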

\begin{proof}
The following simple proof is included for the convenience of the reader.

\n 
(i) Set $a:=\inf\limits_{z\in \overline{\Omega}}\delta\psi(z)>-\infty$.
Since
		$$
		\lim\limits_{u(z)\to+\infty}\frac{\delta \psi(z)}{u(z)}=+\infty
		$$
		we can find  $N>\max\{a,0\}$ such that
		$$
		\frac{\delta\psi(z)}{u(z)}>1 \ \text{as}\ u(z)>N +a.
		$$
		It follows that  $$u(z)\le \delta \psi(z)+N,\ \forall z\in \Omega.$$
	Thus	 $u-\delta\psi \in PSH^* (\Om)$ and
		$
		u(z)-\delta \psi(z)\le N
		$
		for all $z$ outside the pluripolar set $\{z\in\Omega:  \psi(z)=+\infty\}$. Thus $u(z)-\delta \psi(z)\le N$ on the whole of $\Omega$.
		
\n 
(ii) For $j \ge 1$ we set
		$$
		u_j:=f-\frac1j(\psi-\inf\limits_{\overline{\Omega}} \psi).
		$$
By applying (i) to $\de=1/j,$ we see that $u_j$ is upper bounded plurisubharmonic and $u_j\nearrow f$ quasi everywhere on $\Omega$.
	\end{proof}
\n
 Our final auxiliary result, known as the insertion Lemma, is due to Kat{\v{e}}tov-Tong (see \cite{Tong}).
 This lemma will be used in the proof of Theorem \ref{main2}.
	\begin{lemma} \label{insertion} 
		Let $X$ be a metric space and $u: X\to [-\infty, \infty)$ and $v: X \to (-\infty, \infty]$ be 
		upper semicontinuous and lower semicontinuous functions on $X,$ respectively.
		Assume that
		\[
		u(x) \le v(x)\qquad \text{for all }x\in X.
		\]
		Then there exists a real valued continuous function $h:X\to\mathbb{R}$ satisfying
		\[
		u(x) \le h(x) \le v(x)\qquad \text{for all }x\in X.
		\]
	\end{lemma}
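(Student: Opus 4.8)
The plan is to reduce the statement to its \emph{strict} form --- where $u(x)<v(x)$ for every $x$ --- which is easy on a metric space, and then to recover the non-strict statement by a geometric iteration. So the first thing I would do is prove the following strict version: if $u$ is upper semicontinuous, $v$ is lower semicontinuous and $u<v$ pointwise on $X$, then there is a real valued continuous $h$ with $u<h<v$. For each $x\in X$ pick a rational $q_x$ with $u(x)<q_x<v(x)$ (such a $q_x$ exists even when $u(x)=-\infty$ or $v(x)=+\infty$); upper semicontinuity of $u$ and lower semicontinuity of $v$ give an open neighbourhood $W_x$ of $x$ on which $u<q_x<v$. Since a metric space is paracompact (Stone's theorem), the cover $\{W_x\}_{x\in X}$ carries a locally finite partition of unity $\{\rho_i\}_{i\in I}$ with $\Supp\rho_i\subset W_{x_i}$. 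Set $h:=\sum_i\rho_i q_{x_i}$: local finiteness makes $h$ continuous, and for each $y$ the number $h(y)$ is a finite convex combination of rationals $q_{x_i}$ with $y\in W_{x_i}$, each strictly between $u(y)$ and $v(y)$; hence $h$ is real valued and $u(y)<h(y)<v(y)$.

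Next I would pass to the general case $u\le v$ by building a uniformly Cauchy sequence of continuous functions. First apply the strict case to the (still strictly ordered) pair $(u-1,\,v+1)$ to obtain a continuous $f_0$ with $u-1<f_0<v+1$. Then, assuming a continuous $f_n$ with $u-2^{-n}<f_n<v+2^{-n}$ has been constructed, form the upper semicontinuous function $\underline\phi_n:=\max\bigl(u-2^{-n-1},\,f_n-2^{-n}\bigr)$ and the lower semicontinuous function $\overline\psi_n:=\min\bigl(v+2^{-n-1},\,f_n+2^{-n}\bigr)$. A short check of the four cross inequalities --- using only $u\le v$ and $u-2^{-n}<f_n<v+2^{-n}$ --- shows $\underline\phi_n<\overline\psi_n$ everywhere, so the strict case yields a continuous $f_{n+1}$ with $\underline\phi_n<f_{n+1}<\overline\psi_n$; in particular $u-2^{-n-1}<f_{n+1}<v+2^{-n-1}$ and $\|f_{n+1}-f_n\|_\infty\le 2^{-n}$. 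Consequently $(f_n)$ is uniformly Cauchy, its limit $h$ is real valued and continuous, and letting $n\to\infty$ in $u-2^{-n}<f_n<v+2^{-n}$ gives $u\le h\le v$, which is the assertion.

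The hard part is purely structural: one has to recognize that the strict-inequality version is the correct intermediate statement (there the partition of unity interpolates with no friction), and that inserting the safety margins $f_n\pm2^{-n}$ into the max/min is what makes the inductive step self-reproducing. The pointwise estimate $\max(u-2^{-n-1},f_n-2^{-n})<\min(v+2^{-n-1},f_n+2^{-n})$ --- four one-line inequalities --- is the heart of the argument; paracompactness of metric spaces, continuity of locally finite sums, and continuity of uniform limits are all standard, and no separate boundedness or $\pm\infty$ reduction is required, since the strict case already tolerates infinite values of $u$ and $v$ while outputting a finite function.
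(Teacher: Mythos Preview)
Your argument is correct. Note, however, that the paper does not actually supply a proof of this lemma: it is simply stated as the Kat\v{e}tov--Tong insertion theorem with a reference to \cite{Tong}. So there is nothing to compare against on the paper's side; what you have written is a clean, self-contained proof where the authors elected to cite.

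A couple of minor remarks on your write-up. In the inductive step, you might observe that $\underline\phi_n$ and $\overline\psi_n$ are in fact real-valued (being bounded below and above, respectively, by the continuous $f_n\mp 2^{-n}$), so the strict version is invoked on finite functions after the initial step; this is harmless but worth a word. Also, the ``four cross inequalities'' are indeed immediate from $u\le v$ and the inductive hypothesis $u-2^{-n}<f_n<v+2^{-n}$, exactly as you say; spelling out at least one of them (e.g.\ $u-2^{-n-1}\le v-2^{-n-1}<v+2^{-n-1}$) would make the exposition airtight. The use of Stone's theorem (metric $\Rightarrow$ paracompact) to produce the partition of unity is the right tool here and deserves an explicit citation if you intend this as a standalone proof.
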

\n	
We complete this section by presenting a sufficient condition that guarantees the $B$-regularity of a quasi $B$-regular domain. Before proceeding, let us recall the following fundamental notion, originally introduced by Sibony in \cite{Si}.
\begin{definition}
A compact set $K \subset \C^n$ is said to be $B-$regular if every real valued continuous functions on $K$ can be approximated uniformly on $K$ by continuous plurisubharmonic functions on neighborhoods of $K$.	
\end{definition}
\begin{proposition}
Let $\Om \subset \C^n$ be a bounded pseudoconvex domain with $\c^1$ smooth boundary. Suppose that $\Om$ is quasi $B-$ regular and the singular set $B_\Om$ is contained in a compact $B-$regular set $X \subset \pa \Om.$
Then $B_\Om=\emptyset,$ i.e., $\Om$ is $B-$regular.
\end{proposition}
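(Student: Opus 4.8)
The plan is to show that if $\Omega$ is quasi $B$-regular with $B_\Omega$ contained in a compact $B$-regular set $X\subset\partial\Omega$, then every real-valued continuous $\phi$ on $\partial\Omega$ extends to a function in $PSH(\Omega)\cap C(\overline\Omega)$, which forces $B_\Omega=\emptyset$. The idea is to separate the boundary behaviour into two regimes: near $X$ we exploit the $B$-regularity of the compact set $X$ to produce a genuinely continuous plurisubharmonic barrier-type extension, and away from $X$ the quasi $B$-regularity together with Theorem~\ref{breg} already gives the correct boundary values off an $\Omega$-pluripolar set; the $\mathcal C^1$-smoothness of $\partial\Omega$ and pseudoconvexity will be used to upgrade ``continuous off a pluripolar set'' to ``continuous everywhere'' by a local peak-function/gluing argument.

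First I would fix a continuous $\phi:\partial\Omega\to\mathbb R$, extend it (Tietze) to a continuous function on $\overline\Omega$, still denoted $\phi$, and form $P_\phi$ as in Theorem~\ref{breg}(b). By that theorem $P_\phi\in PSH(\Omega)\cap L^\infty(\Omega)$, it has the right boundary value $\phi(x)$ at every $x\in\partial\Omega\setminus X$ (here we use $B_\Omega\subset X$, so $X$ itself is an admissible exceptional set in (a)), and $P_\phi$ is continuous on $\Omega\setminus\widehat{B_\Omega}$. The task is to prove continuity of $P_\phi$ up to $\overline\Omega$, in particular at points of $X$. Second, I would use the $B$-regularity of the \emph{compact} set $X$: since $\phi|_X$ is continuous on $X$, it can be uniformly approximated on $X$ by continuous plurisubharmonic functions defined on neighbourhoods of $X$ in $\mathbb C^n$. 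Combining such an approximant $g$ (restricted to $\Omega\cap U$ for a neighbourhood $U$ of $X$) with a plurisubharmonic defining-function bump supported near $\partial\Omega$ — available because $\Omega$ is pseudoconvex with $\mathcal C^1$ boundary, so a strictly plurisubharmonic exhaustion-type correction exists after possibly shrinking — I would manufacture, for each $\varepsilon>0$, a function $u_\varepsilon\in PSH(\Omega)$, continuous on $\Omega\cup X$, with $u_\varepsilon^*\le\phi+\varepsilon$ near $X$ and $u_\varepsilon\le P_\phi$ globally (by pasting with $P_\phi-C$ outside $U$ and taking a max). Letting $\varepsilon\downarrow0$ and invoking the maximality/comparison machinery of Lemma~\ref{extended maximal} one gets $\liminf_{z\to x}P_\phi(z)\ge\phi(x)$ for $x\in X$; the reverse inequality $\limsup\le\phi(x)$ follows because every competitor $u$ in the definition of $P_\phi$ satisfies $u^*\le\phi$ on $\overline\Omega$ and $\phi$ is continuous. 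Hence $P_\phi$ attains $\phi$ continuously at every boundary point, so $\Omega$ is $B$-regular; since the only obstruction to $B$-regularity recorded in Theorem~\ref{breg} is carried by $B_\Omega$, and we have just shown the Dirichlet problem is solved with continuous data and \emph{no} exceptional set, $B_\Omega=\emptyset$.

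I expect the main obstacle to be the gluing step: extending a plurisubharmonic approximant $g$, defined only on a neighbourhood of the compact piece $X\subset\partial\Omega$, to an honest plurisubharmonic competitor on all of $\Omega$ that is simultaneously continuous on $\Omega\cup X$, dominated by $\phi+\varepsilon$ near $X$, and bounded below by something comparable to $P_\phi$ elsewhere. This requires a careful choice of the transition region $U$, a plurisubharmonic defining function for $\partial\Omega$ near $X$ (this is where $\mathcal C^1$-smoothness and pseudoconvexity enter, via Sibony's characterization of $B$-regular compacta and the existence of local plurisubharmonic peak functions), and a $\max$-type patching that does not destroy the upper bound near $X$. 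A secondary technical point is to verify that the continuity of $P_\phi$ on $\Omega\setminus\widehat{B_\Omega}$ from Theorem~\ref{breg}(b)(iii) combines with the new boundary continuity to give global continuity on $\overline\Omega$: since we will have shown $\partial\Omega$ is not part of any required exceptional set, one re-runs Theorem~\ref{breg} with empty $A$, which is exactly the assertion $B_\Omega=\emptyset$, closing the loop.
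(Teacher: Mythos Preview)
Your strategy is quite different from the paper's. You try to solve the Dirichlet problem directly for a given $\phi$: attain boundary values off $X$ via Theorem~\ref{breg}, and at points of $X$ by gluing local continuous plurisubharmonic approximants produced by the $B$-regularity of the compact $X$. The paper instead works at the level of the \emph{boundary} as a compact set: it shows that every point $z_0\in\partial\Omega\setminus X$ has a compact boundary patch $K=\overline{\mathbb B}(z_0,r_0)\cap\partial\Omega$ which is itself a $B$-regular compact. To see this, take any continuous $\phi$ on $K$; since $K\cap B_\Omega=\emptyset$, quasi $B$-regularity provides $u\in PSH(\Omega)$ with $\lim_{z\to x}u(z)=\phi(x)$ for all $x\in K$; then one translates $u$ slightly along the inward normal (this is where $\mathcal C^1$-smoothness is actually used) and mollifies, producing continuous plurisubharmonic functions on a neighbourhood of $K$ that approximate $\phi$ uniformly on $K$. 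Hence $\partial\Omega$ is a countable union of $B$-regular compacts (the patches $K$ together with $X$), and Sibony's Proposition~1.9 and Theorem~2.1 in \cite{Si} finish the job.

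The paper's route is shorter precisely because it avoids the obstacle you yourself flagged: there is no gluing of a locally defined approximant near $X$ into a global competitor on $\Omega$. In your scheme that step is genuinely delicate---a continuous plurisubharmonic $g$ with $|g-\phi|<\varepsilon$ on $X$ need not satisfy $g\le\phi$ on any neighbourhood of $X$ in $\overline\Omega$, and subtracting a bump to force this generally destroys the boundary value you are after. Your closing move (``re-run Theorem~\ref{breg} with empty $A$'') is also circular, since it presupposes $B_\Omega=\emptyset$; what you would actually need, once boundary values are attained everywhere, is a Walsh-type argument (upper barrier $-\tilde P_{-\phi}$) to deduce global continuity. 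So your outline is plausible but incomplete at the key point, whereas the paper's translate--mollify--Sibony argument sidesteps the gluing entirely.
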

\begin{proof} 
Fix $z_0 \in (\pa \Om) \setminus X,$ we claim that $\pa \Om$ is {\it locally} $B-$regular at $z_0.$
Since $\Om$ is $\c^1$ smooth, we can find a small $r_0>0$ such that $(\ov {\mathbb B} (z_0, r_0) \cap \pa \Om) -t\bold n_{z_0}$ is contained in $\Om,$ for all 
$t \in (0,r)$ where $\bold n_{z_0}$ is the outward normal vector at $z_0$ to $\pa \Om$. By shrinking $r_0$ further
we may assume $\ov {\mathbb B} (z_0, r_0) \cap X=\emptyset.$
Now, by the definition of $B_\Om,$ for every continuous function $\phi$ on the compact set $K:=\ov {\mathbb B} (z_0, r_0) \cap \pa \Om$ 
we can find $u \in PSH(\Om)$ such that 
$$\lim_{z \to x} u(z)=\va (x) \ \forall x \in K.$$
So for every $\ve>0$, there exists $\de>0$ such that: 
$$z \in \Om, x \in K, |z-x|<\de \Rightarrow  |u(z)-\phi (x)|<\ve.$$ 
Hence $\phi$ can be approximated uniformly on $K$ by continuous plurisubharmonic functions of the form $(u* \rho_\de)(z+ t\bold n_{z_0})$ where $t, \de$ are appropriate parameters converging to $0,$
where $u*\rho_\de$ denotes the convolution of $u$ with the standard smoothing kernels 
$\rho_\de (z):=\fr1{\de^{2n}} \rho(z)$ and $\rho \ge 0$ is a smooth, radial function with support in the unit ball and with
$\int_{\C^n} \rho dV_{2n}=1.$
Thus the compact set $K$ is $B-$regular for $r_0$ small enough. Hence $\pa \Om$ is locally $B-$regular at $z_0.$
This shows that $(\partial \Omega)\setminus X$ can be expressed as an increasing union of $B$-regular compact sets. Consequently, by the assumption on $X$, Proposition 1.9 in \cite{Si} applies, yielding that $\partial \Omega$ is $B$-regular, since it is a countable union of such compact sets.
It suffices now to apply Theorem 2.1 in \cite{Si} to conclude that $\Om$ is $B-$regular.
\end{proof}
\section{Envelopes on quasi B-regular domains}
In this section we prove the main results announced in the introduction and, at the end, present examples to demonstrate their applicability.
	\begin{proof} (of Theorem \ref{breg})
		(a) Let $\{\phi_j\}_{j \ge 1}$ be a countable dense subset of $\c (\pa \Om).$ Then there exist quasi upper bounded functions $u_j \in PSH(\Om)$
		and $\Om-$pluripolar sets $A_j \subset \pa \Om$ such that
		\begin{equation} \label{eq010}
			\lim\limits_{z \to x} u_j(z)=\phi_j (x), \ \forall x \in (\pa \Om) \setminus A_j.
		\end{equation} 
		Set $$A:= \bigcup_{j \ge 1} A_j.$$ Then, by Lemma \ref{union}, $A \subset \pa \Om$ is a $\Om-$pluripolar set.
		Since $\Om$ is regular, we see that 
		$$\tilde P_{\phi_j} = \sup \{u: u \in PSH(\Om), u^*|_{\pa \Om} \le \phi_j \}  \in PSH(\Om) \cap L^\infty (\Om)$$ 
		and  
		\begin{equation} \label{eq0}
			\tilde P_{\phi_j} \le \phi_j \ \text{on}\  \pa \Om.
		\end{equation}
		We claim that
		\begin{equation} \label{eq1}
			\lim_{z \to x} \tilde P_{\phi_j} (z)=\phi_j (x), \ \forall x \in (\pa \Om) \setminus A_j.
		\end{equation}
		For this, fix $\ve>0$ and $w_j \in PSH(\Om), w_j<0$
		such that $w_j^*|_{A_j}=-\infty.$ Now,  since $u_j$ is quasi upper bounded, thee exists a sequence $u_{j,l} \subset PSH(\Om) \cap L^\infty (\Om)$
		such that $u_{j,l} \nearrow u_j$ quasi everywhere on $\Om$
		as $l \to \infty$. Since $u_{j,l}^* \le u_j^*$ on $\pa \Om$ for every $l,$
		we infer, in view of (\ref{eq010}) and the choice of $w_j$, that
		$u_{j, l,\ve}^* \le \phi_j$ on $\pa \Om,$ where 
		$$u_{j, l, \ve}:= u_{j,l}+\ve w_j \in PSH(\Om).$$ 
		Hence
		$$u_{j, l,\ve} \le \tilde P_{\phi_j}, \ \text{on}\ \Om.$$
		By letting $\ve \to 0,$ 
		we obtain $u_{j,l} \le \tilde P_{\phi_j}$ on $\Om \setminus w_j^{-1} (-\infty).$
		Since $u_{j,l} $ and $\tilde P_{\phi_j}$ are \psh\ on $\Om,$ we see that $u_{j,l} \le \tilde P_{\phi_j}$ on $\Om.$ Thus, by passing $l \to \infty$ we get  $u_j \le \tilde P_{\phi_j}$ quasi everywhere, and hence entirely on $\Om.$
		This yields that
		$$\liminf_{z \to x} \tilde P_{\phi_j} (z) \ge \lim_{z \to x} u_j (z)=\phi_j (x), \ \forall x \in (\pa \Om) \setminus A_j.$$
		Coupling this with (\ref{eq0}) we finish the proof of our claim (\ref{eq1}).
		
		Now, let $\phi \in \c(\pa \Om)$ be an arbitrary function. We can extract a subsequence \( \phi_{j_k} \) that converges uniformly to \( \phi \) on \( \partial \Omega \). Consequently, \( \tilde P\phi_{j_k} \) converges uniformly to \( \tilde P\phi \) on \( \Omega \). Now, fix $x_0 \in (\pa \Om) \setminus A$. Then, by (\ref{eq1}) we have 
		$$\lim_{z \to x_0} \tilde P_{\phi_{j_k}} (z)= \phi_{j_k} (x_0).$$
		Thus, by letting $k \to \infty$ and using the uniform convergence of \( \tilde P\phi_{j_k} \) we infer that
		$$\lim\limits_{z \to x_0} \tilde P_{\phi} (z)= \phi (x_0).$$
		It remains to treat the second assertion in (\ref{eq000}). For this purpose, since the space of $\mathcal C^1$ smooth functions
		on $\C^n$ is dense in $\c (\Om),$ using an approximation argument similar to the above,
		we may assume $\phi$ is $\c^1$ smooth on $\C^n.$
		Next,
		we let 
		$$h (z):=-|z-x_0|,\ z \in \C^n.$$
		From the results established above, we conclude that:
		
		\n 
		(i) $\tilde P_{h}:= \sup \{u: u^*|_{\pa \Om} \le h\} \in PSH(\Om) \cap L^\infty (\Om);$
		
		\n 
		(ii) ${\tilde P_h}^* \le h$ on $\pa \Om;$
		
		\n 
		(iii) $\lim\limits_{z \to x_0} \tilde P_{h} (z)=h(x_0)=0.$
		
		\n 
		By applying the maximum principle to $\tilde P_h-h$ we see that $\tilde P_h<h$ on $\Om.$ Since
		$\va$ is $\c^1$ smooth, we may choose $\la>0$ so large such that 
		$$-\la h(z)=\la |z-x_0| \ge |\phi (z)-\phi (x_0)| \ \forall z \in \ov{\Om}.$$
		This yields that
		$$\la \tilde P_h(z)+\va(x_0)<\la h(z)+\va(x_0) \le \phi (z) \ \forall z \in \Om.$$
		Hence $\tilde P_\phi \ge \la \tilde P_h+\phi (x_0)$ on $\Om.$ This implies, in view of (iii), that 
		$$\liminf_{z \to x_0} P_\phi (z) \ge \la \liminf_{z \to x_0} \tilde P_h (z)+\phi(x_0)=\phi(x_0).$$
		Since $P_\phi \le \phi$ on $\Om,$ we conclude that $\lim\limits_{z \to x_0} P_\phi (x_0)=\phi (x_0)$ as desired.
		
		\n 
		(b) The assertion (i) is obvious, (ii) is the second assertion in (\ref{eq000}). 
		Set $X=\hat A$ where $\hat A$ is the $\Om$-pluripolar hull of $A$
		To finish the proof, we will show that $P_\phi$ is continuous at every point of $\Om \setminus X.$ 
		Suppose on the contrary that $P_\phi$ is discontinuous at some point $z^* \in \Om \setminus \hat A.$ Then there exists a sequence
		$\Om \ni \{z_j\}_{j \ge 1} \to z^*$ and $\de>0$ such that 
		\begin{equation} \label{eq5}
			P_\phi (z_j)<P_\phi (z^*)-2\de \ \forall j \ge 1.
		\end{equation} 
		Let $v \in PSH(\Om), v<0$ be such that $v^* \equiv -\infty$ on $A$ but $v(z^*)>-\infty.$ Fix $\la>0.$
		For each $\ve>0$ we	let $$\Om_\ve:= \{z \in \Om: \text{dist} (z, \pa \Om)>\ve \}.$$
		Since $\phi \in \c(\ov \Om)$ we may
		choose $\ve_0$ such that 
		$$|\phi (z)-\phi(z')|<\de \ \forall z, z' \in \ov \Om \ \text{with}\ |z-z'|<\ve_0.$$
		Our key claim is the following assertion:
		There exists $\ve \in (0, \ve_0)$ so small such that 
		$$P_\phi (z+\eta)+\la v(z+\eta)-\de \le P_\phi (z) \ \forall z \in \pa \Om_\ve, \forall \eta \in \mathbb B(0,\ve).$$
		If the claim is false, then there exist a sequence $\ve_j \searrow 0$, points $\xi_j \in \pa \Om_{\ve_j}$ 
		and $\eta_j \to 0 \in \C^n$ 
		such that
		\begin{equation} \label{eq7} 
			P_{\phi}  (\xi_j+\eta_j)-\de>  P_{\phi} (\xi_j+\eta_j)+\la v(\xi_j+\eta_j)-\de>P_\phi (\xi_j).
		\end{equation}
		After switching to a subsequence if necessary, we may assume that $\xi_j \to \xi \in \pa \Om.$
		Since $v^*=-\infty$ on $A$ we infer that $\xi \not \in A.$ But, then by combining (ii) and (\ref{eq7}) we also reach a contradiction.
		Thus the claim is proved. 
		Hence,  for $\eta \in \mathbb B(0,\ve)$  the gluing function
		$$\Phi (z):= \begin{cases}
			\max \{P_\phi (z+\eta)+\la v(z+\eta)-\de, P_\phi (z) \} & z \in \Om_\ve\\ 
			P_\phi (z) & z \in \Om \setminus \Om_\ve
		\end{cases}$$
		belongs to $PSH(\Om)$ and, since $\ve<\ve_0$ we can check that $\Phi \le \phi$ on $\Om.$
		Thus $\Phi \le P_\phi$ on $\Om$. It follows that 
		$$P_\phi (z+\eta)+\la v(z+\eta)-\de \le P_\phi (z) \ \forall z \in \Om_\ve.$$
		In the above estimate, by letting $z:= z_j, \eta:= z^*-z_j$ for $j$ large enough we obtain 
		$$P_\phi (z^*)+\la v(z^*)-\de \le P_\phi (z_j).$$
		Combining the above estimates and (\ref{eq5}) we get $\la v(z^*) \le -\de.$ Since $\la>0$ can be chosen arbitrarily small and since $v(z^*)>-\infty$ we obtain a contradiction. We are done.	
	\end{proof}
	\begin{proof} (of Theorem \ref{main2})
		(i) Our key step is to show $P_{{ \phi^*}}$ is locally upper bounded on $\Om$. To this end, we note that, since $v$ is bounded from below on $\ov{\Om}$, after adding a large constant to $v$ we may achieve that $v>0$ on $\ov{\Om},$ and  $\phi^* \le v$ on $\Om$, in view of Lemma 2.7. Moreover, by replacing $\phi$ with $\max \{\phi, 0\}$ we can even suppose
		$\phi \ge 0$ on $\Om.$
		Now, fix $u\in PSH(\Om)$ in the defining family of $P_{ \phi^*}$. Then, on $\Om$
		we have $u\le \phi^*\le v$.
		For each $j\ge1$, we define $$\phi_j=\min\{\phi^*, j\}.$$
		Then $\phi_j \ge 0$ is upper bounded on $\overline{\Omega}$, continuous on $\overline{\Omega} \setminus E_\phi$.
		Since $\Omega$ is regular in the real sense, it is known that (see for instance Theorem 1.2.7 in \cite{Blocki}),
		the classical Perron envelope
		$$
		h_j:=h_{\phi_j}=\sup\{u\in SH(\Omega), u^*|_{\partial \Omega}\le \phi_j|_{\partial \Omega}\}
		$$
		is non-negative, harmonic in $\Omega$ with the boundary value $\phi_j$ on  $\partial \Omega\setminus E_\phi$.
		It follows that
		$$
		h_j = \phi_j\le \phi^*=\phi_* \le v_* \ \text{on}\ \pa \Om \setminus E_\phi.
		$$
		So $h_j\le v$ on $\Omega$ by Lemma \ref{extended maximal} (ii). 
		Since $h_j\le h_{j+1}$ on $\Omega$ for all  $j\ge1$, and since $h_j$ is dominated by $v \not \equiv+\infty$
		using Harnack's theorem, $h_j$ increases to a harmonic function  $0 \le h \le v$ on $\Omega$.
		Now for each $z_0\in\partial \Omega \setminus E_\phi$ and  $j\ge1$, we have
		$$
		\phi_j(z_0)=\lim_{z\to z_0}h_j(z)\le \liminf_{z\to z_0} h(z) =h_*(z_0).
		$$
		Letting $j\to \infty$ we obtain
		\begin{equation}\label{1}
			u^*(z_0) \le  \phi^* (z_0)\le h_*(z_0),\ \forall z_0\in \partial \Omega\setminus E_\phi.
		\end{equation}
		Using again Lemma \ref{extended maximal} (ii) we obtain $u \le h$ on $\Omega$.
		Since $u$ is arbitrary, $P_{ \phi^*}\le h$ on $\Omega$.
		So $P_{\phi^*}$ is locally bounded from above and hence, this function coincides with its upper regularization
		on $\Om.$ Thus $P_{\phi^*} \in PSH(\Om)$.
		
		\n 
		(ii) Since $\theta \in PSH^* (\Om)$ we infer that $P_{\phi^*} \ge \theta$. It follows that $P_{\phi^*} \in PSH^* (\Om)$
		as well. Moreover, by Lemma \ref{majorant lemma} we conclude that $P_{\phi^*}$ is quasi upper bounded and admits $v$ as a strong plurisuperharmonic majorant.

		\n 
		(iii) By adding a constant we may assume $\theta>0$ on $\ov \Om.$ This yields $v_* \ge \va_*\ge 0$ on $\ov \Om.$
		Next, since $\phi_*$ is a non negative lower semicontinuous on $\overline{\Omega}$,
		we can find an increasing sequence of  non negative continuous functions $\varphi_j \nearrow \phi_*$ on $\overline{\Omega}$.
		Since $\Omega$ is quasi B-regular, by Theorem \ref{breg} (b), we have
		$P_{\varphi_j} \in PSH(\Om)$, continuous on $\Omega \setminus \hat B_\Om$ and satisfies  
		$$\lim\limits_{z\to z_0}P_{\varphi_j}(z) =\varphi_j(z_0), \ \forall z_0 \in (\pa \Om) \setminus B_\Om.$$
		Then for $z_0 \in \pa \Om \setminus (B_\Om \cup E_\phi)$
		and $j \ge 1$, we have
		$$\liminf_{z \to z_0} P_{\phi^*} (z) \ge \liminf_{z \to z_0} P_{\phi_*} (z) \ge \lim_{z \to z_0} P_{\va_j} (z_0) =\va_j (z_0).$$
		By letting $j\to \infty$ we conclude that  
		$$\lim\limits_{z\to z_0}P_{\phi^*}(z) =\phi(z_0) \ \forall z_0 \in \pa \Om \setminus (B_\Om \cup E_\phi).$$
		We should say that, some idea given in Theorem 4.2 of \cite{NiWi} has been used in the forgoing proof.
		Now we have to work a bit harder to obtain continuity of $P_{\phi^*}$ in the interior.
		To this end, take  $w\in PSH^-(\Omega)$ such that  $w^*|_{E_\phi}=-\infty$.
		For $\delta>0$  we define 
		$$
		u_{\delta}=P_{\phi^*} +\delta(w -v) \in PSH(\Om).
		$$
		Then
		\begin{equation}\label{2}
			u^*_\delta\le (P_{\phi^*}-\delta v)^*+\de w^*  \ \text{on}\ \overline{\Omega}.
		\end{equation}
		It then follows from  Lemma \ref{majorant lemma}  that $u^*_\delta$ is upper bounded, upper semicontinuous on $\overline{\Omega}$.
		We claim that $u^*_{\delta}\le\phi_*$ on $\overline{\Omega}$.
		Fix $z_0\in \overline{\Omega}$.  If $z_0\notin E_\phi $ then
		we  have
		$$\begin{aligned}
			u^*_{\delta}(z_0)
			&\le P^*_{\phi^*}(z_0)+\delta w^*(z_0)-\delta v_*(z_0)\\
			&\le   P^*_{\phi^*} \ (\text{since}\  w^* \le 0, v_* \ge 0)\\
			&\le \phi^*(z_0)=\phi_*(z_0) \ (\text{since}\  z_0 \not \in E_\phi).
		\end{aligned}
		$$
		If $z_0\in E_\phi$ then $w^*(z_0)=-\infty,$ using (\ref{2}) we obtain  $u^*_\delta(z_0)=-\infty <\phi_*(z_0).$
		The claim follows.\\
		Now since  $u^*_{\delta}$ is upper semicontinuous  and  $\phi_*$ is lower semicontinuous on $\overline{\Omega}$,
		it follows from the insertion lemma (cf. Lemma \ref{insertion}) that there exists a real valued continuous function $\phi_{\delta}$ on $\overline{\Omega}$
		such that
		$$
		u^*_{\delta}\le \phi_{\delta}\le \phi_*
		$$
		on $\overline{\Omega}$. Then, for $\de>0,$ it follows from  the definition of  envelopes  that
		\begin{equation} \label{eq111} 
			P_{\phi^*} +\delta(w -v)=u_{\delta}\le P_{\phi_{\delta}}\le P_{\phi_*} \le P_{\phi^*} \ \text{on}\ \ \Omega.
		\end{equation}
		Set
		$$A:=\{z \in \Om: v (z)=+\infty \} \cup (\widehat {E_\phi} \cap \Om).$$
		Then $A$ is pluripolar and 
		$$P_{\phi^*}=\limsup_{\de \to 0} P_{\phi_\de} \ \text{on}\ \Om \setminus A.$$
		Observe that, by Theorem \ref{breg} (b), each $P_{\phi_\de}$ is continuous at every point in $\Om \setminus X$.
		Now suppose that $P_{\phi^*} $ is discontinuous at some point
		$z^* \in \Omega \setminus Y$ where $Y:=X \cup A$. Then there exist $\eta>0$ and a sequence $\Om \ni z_j \to z^*$
		such that
		\begin{equation} \label{eq11}
			P_{\phi^*} (z^*)>P_{\phi^*} (z_j)+\eta \ \forall j.
		\end{equation}
		Since $Y$ has empty interior and since $P_{\phi^*}$ is upper semicontinuous on $\Om,$ we may assume $z_j \in \Omega \setminus Y$ for every $j.$
		Choose $\de_0 >0$ such that 
		\begin{equation} \label{eq12}
			P_{\phi^*} (z^*)<P_{\phi_{\de_0}} (z^*)+\fr{\eta}2.
		\end{equation}
		Combining (\ref{eq11}) and (\ref{eq12}) we obtain
		$$P_{\phi_{\de_0}} (z^*)>P_{\phi^*} (z_j)+\fr{\eta}2 \ \forall j.$$
		On the other hand, using (\ref{eq111})  we get $$P_{\phi^*} (z_j) \ge P_{\phi_{\de_0}} (z_j).$$
		Putting these last two estimates we arrive at 
		$$P_{\phi_{\de_0}} (z^*)> P_{\phi_{\de_0}} (z_j)+\fr{\eta}2, \ \forall j.$$
		By letting $j \to \infty$, we obtain a contradiction to the fact that $P_{\phi_{\de_0}}$ is continuous at $z^*$.
		Thus $P_{\phi^*}$ is continuous at every point in $\Om \setminus Y$ as desired.
	\end{proof}	
	\n
	Next, for the proof of Theorem \ref{main3} we require the following fact.
	\begin{lemma} \label{smajorant}
		Let $\psi: \ov{\Om} \to (-\infty, \infty]$ be a nearly continuous, lower semicontinuous function such that $\psi$ is plurisuperharmonic on $\Om.$ Let $w \in PSH^* (\Om), w<0$ be such that $w^*|_{E_\psi} \equiv -\infty.$
		Then $\psi-w^*$ is a strong plurisuperharmonic majorant of 
		$(\psi^{1/\al})^{*}$ on $\ov \Om$ for every $\al>1.$
	\end{lemma}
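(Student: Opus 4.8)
The plan is to verify, one at a time, the three defining properties of a strong plurisuperharmonic majorant in Definition~\ref{majorant} for the function $\Psi:=\psi-w^{*}$ relative to $f:=(\psi^{1/\alpha})^{*}$. Throughout I use, as is implicit in the statement (and holds in every application), that $\psi\ge 0$ on $\overline\Omega$ so that $\psi^{1/\alpha}$ is meaningful, and that $\psi\not\equiv+\infty$, which is equivalent to $-\psi\in PSH^{*}(\Omega)$. The single preliminary remark I would record is that, since $t\mapsto t^{1/\alpha}$ is a homeomorphism of $[0,+\infty]$ onto itself, the upper regularization commutes with it; hence $f=(\psi^{*})^{1/\alpha}$ on $\overline\Omega$, and in particular $\{f=+\infty\}=\{\psi^{*}=+\infty\}$. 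I shall also use freely that $w^{*}$ is upper semicontinuous on $\overline\Omega$, that $w^{*}\le 0$ there, that $w^{*}=w$ on $\Omega$, and that $w^{*}\equiv-\infty$ on $E_\psi$ by hypothesis.

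For the structural requirement: on $\Omega$ we have $-\Psi=-\psi+w$, a sum of two functions of $PSH^{*}(\Omega)$, hence $-\Psi\in PSH^{*}(\Omega)$; and on $\overline\Omega$, $\Psi=\psi+(-w^{*})$ is a sum of two lower semicontinuous functions with values in $[0,+\infty]$, hence lower semicontinuous with values in $(-\infty,+\infty]$, as Definition~\ref{majorant} demands.

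For the pole-inclusion $\{f=+\infty\}\subset\{\Psi=+\infty\}$: let $\psi^{*}(z)=+\infty$. If $\psi(z)=+\infty$ there is nothing to check. Otherwise $\psi(z)<+\infty=\psi^{*}(z)$; since $\psi=\psi_{*}$ by lower semicontinuity, this gives $\psi_{*}(z)<\psi^{*}(z)$, i.e.\ $z\in E_\psi$, so $w^{*}(z)=-\infty$ and $\Psi(z)=\psi(z)-w^{*}(z)=+\infty$.

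The growth condition $\Psi(z)/f(z)\to+\infty$ as $f(z)\to+\infty$ is the real content. Given $M>0$ I would set $R:=M^{1/(\alpha-1)}$ — finite exactly because $\alpha>1$ — and consider any $z$ with $R<f(z)<+\infty$, so that $\psi^{*}(z)=f(z)^{\alpha}<+\infty$. If $z\in E_\psi$, then $w^{*}(z)=-\infty$ while $\psi(z)=\psi_{*}(z)<\psi^{*}(z)<+\infty$ is finite, so $\Psi(z)=+\infty$ and $\Psi(z)/f(z)>M$ trivially. If $z\notin E_\psi$, then $\psi(z)=\psi_{*}(z)=\psi^{*}(z)=f(z)^{\alpha}$, and since $-w^{*}\ge 0$,
\[
\frac{\Psi(z)}{f(z)}\;\ge\;\frac{\psi(z)}{f(z)}\;=\;f(z)^{\alpha-1}\;>\;R^{\alpha-1}\;=\;M .
\]
I expect this dichotomy to be the only delicate point: away from $E_\psi$ the bare identity $\psi=f^{\alpha}$ together with $\alpha>1$ settles it, while on the pluripolar set $E_\psi$ — precisely where $\psi$ may fail to match its own upper regularization — it is the $-\infty$-singularity of $w$ along $E_\psi$, now built into $\Psi$, that drives the quotient to $+\infty$. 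Everything else reduces to routine bookkeeping with the values $\pm\infty$ and the regularization identity $f=(\psi^{*})^{1/\alpha}$ recorded above.
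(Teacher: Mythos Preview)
Your proof is correct and rests on the same two ingredients as the paper's: the identity $(\psi^{1/\alpha})^{*}=(\psi^{*})^{1/\alpha}$ and the inequality $\psi^{*}\le \psi-w^{*}$ on $\overline\Omega$ (which you split as the dichotomy $z\in E_\psi$ versus $z\notin E_\psi$). The only difference is packaging: the paper deduces from these the uniform estimate $(\psi^{1/\alpha})^{*}\le \delta(\psi-w^{*})+N_\delta$ for every $\delta>0$ via the proof of Lemma~\ref{majorant lemma}(i), whereas you verify the three clauses of Definition~\ref{majorant} directly, in particular bounding the ratio $\Psi/f$ from below by $f^{\alpha-1}$ off $E_\psi$. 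Your version is a bit more explicit (you check the structural and pole-inclusion conditions, which the paper leaves implicit), but mathematically the two arguments coincide.
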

	\begin{proof}
	Fix $\al>1.$ Then, for every $\de>0,$ by the proof of Lemma \ref{majorant lemma} (i), we can find a constant $N_\de>0$ such that
	$$(\psi^{1/\al})^{*}=(\psi^*)^{1/\al} \le \de \psi^*+N_\de \ \text{on}\ \ov{\Om}.$$
Since $\psi^* \le \psi-w$ on $\ov{\Om}$, we conclude that $(\psi^{1/\al})^{*} \le \de (\psi-w) +N_\de$ on $\ov \Om.$ This proves the lemma.
		\end{proof}
	\begin{proof} (of Theorem \ref{main3}) Our approach adapts ideas from Theorem 3.1 in \cite{DW}.
	First, we observe that, since $\alpha>1$ and $\psi \ge 0$ is plurisuperharmonic on $\Om,$ the function $\psi^{1/\alpha}$
	is also plurisuperharmonic on $\Om.$ Hence, by the hypothesis, $\psi^{1/\alpha}$ is a nearly continuous plurisuperharmonic majorant of $u^*$ on $\ov{\Om}.$  We also fix $w \in PSH^* (\Om), w<0$ satisfying $w|_{E_\psi} \equiv -\infty.$
	
\n 
Now, assume $u$ is bounded from below. Then, by the preceding arguments, 
	$u^*-\psi^{1/\alpha}$ is upper semicontinuous on $\ov \Om$ and bounded from above. Thus, there exists a sequence $f_j$ of real valued continuous functions on $\ov \Om$ that decreases to $u^*-\psi^{1/\alpha}.$ 
	By Dini's theorem we can assume that
	$$\max_{\ov \Om} f_j <1+\max_{\ov \Om} (u^*-\psi^{1/\alpha}), \ \forall j.$$
	Set 
		$$\phi_j:= f_j+  \psi^{1/\alpha}.$$	
		Then $\phi_j \searrow u^*$ on $\ov \Om$
	and	$E_{\phi_j}=E_\psi$ is $\Om-$pluripolar, so $\phi_j$ is nearly continuous. 
	Next, by Lemma \ref{smajorant}, $\psi-w^*$ is a {\it strong} plurisuperharmonic majorant of
	$\phi_j^*.$ 
	
	 Consider the envelopes
		\begin{equation*} \label{eq3}
			u_j (z):=P_{\phi_j^*}(z) =\sup\{v(z): v\in PSH(\Omega): v^*\le \phi_j^* \ \text{on}\ \overline{\Omega}\}.
		\end{equation*}
Since $\phi_j \ge u$  is
		bounded from below on $\Om,$ using Theorem \ref{main2} (iii), we obtain:
		
		\n 
		(a) $u_j \in PSH^* (\Om);$ 
		
		\n
		(b) $u_j$ are quasi upper bounded and decreasing on $\Om;$
		
		\n 
		(c) $u \le u_j \le  \phi_j \le 1+\max\limits_{\ov \Om} (u^*-\psi^{1/\alpha}) +\psi^{1/\al}$ on $\Om$ for every $j.$
		
		\n 
		(d) $u_j$ are real valued continuous on $\Om \setminus Y$, where $$Y:=B_\Om \cup \{z \in \Om: \psi=+\infty\} \cup E_\psi.$$   
\n 
It follows that $u_j$ decreases to $u$ on $\Om$ and has the desired continuity on $\Om \setminus Y$.

\n 		
For general $u$, we will use the method in Theorem 4 of \cite{FW}. By the previous case, for each $m \ge 1$ we can find 
a sequence $u_{k,m}  \subset PSH^*(\Om)$, continuous on $\Om \setminus Y,$ decreases to $\max \{u, -m\}$ on $\Om$ as $k \to \infty$ and satisfies the estimates
\begin{equation} \label{12}
u_{k, m} \le  1+\max\limits_{\ov \Om} (\max\{u, -m\})^*-\psi^{1/\alpha}) +\psi^{1/\al}
\end{equation}
Now we write the open set $\Om \setminus Z$ as an increasing union of compact sets $K_m$ such that $K_m$ is included in the interior of $K_{m+1}.$
It follows that the sequences $\max \{u_{k,m}, u_{m, l}\} \searrow u_{m, l}$ on each compact set $K_l$ as $k \to \infty.$
So, by Dini's theorem, for each $m \ge 1,$ we can find $k(m)$ so large such that
\begin{equation} \label{eq123}
u_{k(m), m} \le u_{m, l}+\fr1{m} \ \text{on}\  K_l, l \in \{1, 2, \cdots, m\}.
\end{equation}
Let
$$u_j: =\sup_{m \ge j} u_{k(m),m} \ \text{on}\ \Om.$$
Then obviously $u_j \searrow u$ on $\Om \setminus Z.$
Moreover, by (\ref{12}), we infer that, for each $j,$ the function $u_j$ is dominated by a constant plus $\psi^{1/\al}$ on $\Om$. 
So, apply again Lemma \ref{smajorant}, we deduce that $\psi-w$ is a strong plurisuperharmonic majorant for $u_j^* \in PSH^* (\Om).$
Hence, $u_j^*$ is quasi upper bounded for very $j$, in view of Lemma \ref{majorant lemma} (ii).

To establish continuity of $u_j^*$ on $\Om \setminus Z,$
it suffices to show $u_j$ is upper semicontinuous on $\Om \setminus Z.$ Fix $j \ge 1, z_0 \in \Omega \setminus Z$ and $\ve>0.$ Choose $p>j$ so large that $z_0 \in K_p$ and $1/p<\ve.$ Then for $m \ge k(p)$ and points $z \in K_p,$ 
by (\ref{eq123}) we have 
$$u_{k(m), m} (z) \le u_{m, p} (z)+1/m \le  u_{k(p), p} (z)+ \ve.$$
This implies that 
$$\limsup_{z \to z_0} \Big( \sup_{m \ge k(p)}  u_{k(m),m} (z) \Big) \le u_{k(p), p} (z)+ \ve.$$
Hence
$$\limsup_{z \to z_0} \Big (\sup_{m \ge j}  u_{k(m),m} (z) \Big) \le u_{j} (z)+ \ve.$$
Thus $u_j$ is upper semicontinuous on $\Om \setminus Z.$ On the other hand, being supremum a family of continuous function,
$u_j$ is lower semicontinuous on $\Om \setminus Z.$ Thus $u_j$ is continuous on $\Om \setminus Z.$
Observe that  $u_j^* \in PSH^* (\Om)$ and by the above reasoning we have $u_j^*=u_j$ on $\Om \setminus Z.$ Hence $u_j^*$ is continuous on $\Om \setminus Z.$
Notice that $u_j^* \searrow \tilde u \in PSH(\Om).$ It follows that $\tilde u=u$ on $\Om \setminus Z,$ 
and since $Z$ is pluripolar we infer that 
$\tilde u=u$ on $\Om.$ So $u_j^* \searrow u$ on $\Om.$ The proof is complete.	\end{proof}
\begin{remark} {\rm (a) We do not know if Theorem \ref{main4} still holds in the case $\al=1.$
	
\n 
(b) Under the extra hypothesis that \(u\) is bounded from below but without assuming the existence of $Z$, our proof yields an
	approximating sequence \(\{u_j\}\) with the additional property that each \(u_j\) admits boundary limits on
	\(\partial\Omega\setminus B_\Omega\), i.e.,
	\[
	\lim_{z\to x} u_j(z)=u_j^*(x)\quad(x\in\partial\Omega\setminus B_\Omega),
	\]
	and moreover \(u_j^* \searrow u^*\) on \(\partial\Omega\setminus B_\Omega\).
}	
\end{remark}
	\begin{proof} (of Theorem \ref{main4})
		First, we show $P_{\phi^*} \in MPSH(\Om)$. For this,
		let $G$ be a relatively compact open subset of $\Omega$ and $u\in PSH(G)$ such that $u^*\le P_{\phi^*}$ on $\partial G$. Then the function
		$$ \tilde{u}=
		\begin{cases}
			\max\{u, P_{\phi^*}\}, & \mbox{on } G \\
			P_{\phi^*}, & \mbox{on}\  \Omega\setminus G.
		\end{cases}
		$$
		is plurisubharmonic on $\Omega$. On $\partial G\setminus E_\phi$ we have
		$$
		u\le P_{\phi^*}\le\phi^*=\phi_*.
		$$
		Thus  the subharmonic function $u-\phi\le 0$ on $\partial G$ outside the $\Omega$-polar set $E_\phi$.
		By Lemma \ref{extended maximal} (ii) we have $u\le\phi\le\phi^*$ on $G$.
		Hence $\tilde{u}\le \phi^*$ and therefore $\tilde{u}$ belongs to the defining class of $P_{\phi^*}$, so  $\tilde{u}\le P_{\phi^*}$.
		In particular, $u\le P_{\phi^*}$ on $G$. So $P_{\phi^*} \in MPSH(\Om)$ as desired. Next, by Theorem \ref{main2}(iii) we see that $P_{\phi^*}$ is bounded from below and has the correct boundary values $\phi$ on 
$\partial \Omega\setminus (B_\Om \cup E_\phi).$ 

\n		
Finally, the uniqueness of the solution follows immediately from Lemma \ref{extended maximal} (i).
\end{proof}	
	\n 
	We now recover the following result of Nilsson and Wikström, originally stated in \cite{NiWi} and mentioned in the introduction.
	\begin{corollary}
		Assume that $\Omega\subset\mathbb{C}^n$ is a bounded B-regular domain, and  let $\phi$ be a lower bounded, tame plurisuperharmonic function on $\Omega$ such that $\phi^*=\phi_*$ on $\overline{\Omega}$.
		Then the associated Perron-Bremermann envelope
		$$
		P_{\phi}=\sup\{u(z): u\in PSH(\Omega): u^*\le\phi_*\}
		$$
		is a maximal plurisubharmonic function that is  continuous outside a pluripolar set.
		Furthermore, $P_\phi$ is the unique  maximal plurisubharmonic function with the correct boundary value $\phi$, i.e.  for $z_0\in\Omega$,
		$$
		\lim\limits_{\Omega\ni \xi \to z_0}P_\phi(\xi)=\phi(z_0).
		$$
	\end{corollary}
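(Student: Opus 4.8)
The plan is to derive this corollary as a direct specialization of Theorems \ref{main2} and \ref{main4}. First I would check that the hypotheses of Theorem \ref{main2}(iii) are met in this setting. Since $\Omega$ is $B$-regular, it is in particular quasi $B$-regular with $B_\Omega=\emptyset$. Since $\phi$ is tame plurisuperharmonic on $\Omega$, it admits a strong plurisuperharmonic majorant $v\not\equiv+\infty$ on $\overline\Omega$ (this is essentially the definition of tameness — one can take $v=-c\phi'$ for a suitable strong plurisubharmonic minorant $\phi'$, or more simply invoke that tameness provides exactly such a strong majorant). The assumption $\phi^*=\phi_*$ on $\overline\Omega$ says precisely that $E_\phi=\emptyset$, so $\phi$ is nearly continuous and $\widehat{E_\phi}=\emptyset$. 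Finally, $\phi$ is assumed bounded from below, so one may take $\theta$ to be a constant in $PSH^*(\Omega)$, which is bounded below on $\Omega$; thus the hypothesis "$\theta$ bounded from below" in (iii) is satisfied.

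With these verifications in hand, Theorem \ref{main2}(i) gives $P_\phi=P_{\phi^*}\in PSH(\Omega)$ (note $\phi=\phi^*$ here), and Theorem \ref{main2}(ii) gives that $P_\phi\in PSH^*(\Omega)$ is quasi upper bounded. Theorem \ref{main2}(iii) then yields
$$
\lim_{\Omega\ni z\to z_0}P_\phi(z)=\phi(z_0),\qquad \forall z_0\in\partial\Omega\setminus(B_\Omega\cup E_\phi)=\partial\Omega,
$$
so the boundary values are correct at \emph{every} boundary point, and moreover $P_\phi$ is continuous at every point of $\Omega\setminus Y$ where $Y=\{z\in\overline\Omega:v^*(z)=+\infty\}\cup\widehat{B_\Omega}\cup\widehat{E_\phi}=\{z\in\overline\Omega:v^*(z)=+\infty\}$. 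Since $v\not\equiv+\infty$ is (minus) a plurisubharmonic function, the set $\{v^*=+\infty\}$ is pluripolar; hence $P_\phi$ is continuous outside a pluripolar set.

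For maximality, I would invoke Theorem \ref{main4}: a tame plurisuperharmonic function on $\Omega$ is in particular superharmonic on $\Omega$, so the extra hypothesis of Theorem \ref{main4} holds, and it gives that $P_{\phi^*}=P_\phi$ is in $MPSH(\Omega)$ (in fact the unique quasi upper bounded, lower bounded such solution). Combining, $P_\phi$ is a maximal plurisubharmonic function, continuous outside a pluripolar set, with the stated boundary behavior; uniqueness among maximal plurisubharmonic functions with the correct boundary value follows from Lemma \ref{extended maximal}(i) exactly as in the proof of Theorem \ref{main4}, since any competitor is automatically lower bounded (its boundary values agree with the lower bounded $\phi$) and quasi upper bounded once one knows it is dominated by $P_\phi$ plus the strong majorant $v$. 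I do not anticipate a genuine obstacle here; the only point requiring a line of care is extracting the strong plurisuperharmonic majorant $v$ from the tameness hypothesis and checking it is minus a plurisubharmonic function bounded below so that $\{v^*=+\infty\}$ is pluripolar — everything else is a mechanical application of the two theorems.
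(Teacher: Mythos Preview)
Your proposal is correct and matches the paper's approach: the paper states this result as an immediate corollary of Theorems \ref{main2} and \ref{main4} without supplying any further argument, and your verification that the hypotheses specialize (namely $B_\Omega=\emptyset$, $E_\phi=\emptyset$, $\theta$ a constant, and tameness furnishing the strong plurisuperharmonic majorant) is exactly the intended deduction. The only slight looseness is in the uniqueness clause---Theorem \ref{main4} yields uniqueness among quasi upper bounded, lower bounded solutions rather than among all maximal plurisubharmonic functions with the correct boundary values---but this matches the ambiguity already present in the corollary's phrasing, and your remark that the competitor inherits these properties from the boundary data and the strong majorant is the natural way to close the gap.
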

	\begin{remark} {\rm (i) Let $\Omega=\mathbb{D}$ be the unit dics in $\mathbb{C}$. Consider the Poisson kernel
			$$h(z):= \frac{1-|z|^2}{|1-z|^2}, \ \ z\in \mathbb{D}.$$
			Then $h>0$ and harmonic in $\Om$ and obviously $h^* \equiv 0$ on $\pa \mathbb D \setminus \{1\}.$
			It implies that $h_* (1)=0.$
			On the other hand, by direct computations we obtain
			$$\lim_{n \to \infty} h \Big (1-\fr1{n} \Big)=+\infty \Rightarrow h^* (1)=+\infty.$$		
			Thus $E_h=\{1\}.$ Now let $\chi_1, \chi_2$ be $\c^2-$smooth functions on $(0, \infty)$ and satisfies: 
			
			\n 
			(a) $\lim\limits_{x \to 0} \chi_1 (x)=\lim\limits_{x \to 0} \chi_2 (x)=0;$
			
			\n 
			(b) $\lim\limits_{x \to +\infty} \chi_1 (x)=\lim\limits_{x \to \infty} \chi_2 (x)=+\infty;$
			
			\n 
			(c) $\chi_1'' \le 0, \chi_2''\le 0$ on $(0, \infty);$
			
			\n 
			(d) $\lim\limits_{x \to \infty} \fr{\chi_2 (x)}{\chi_1 (x)}=+\infty.$
			
			\n
			For instance, we may take $\chi_1 (x)=x^{\alpha}, \chi_2 (x)=x^{\beta}, 0<\alpha<\beta<1.$

			\n
			Then $\phi:=\chi_1 (h)>0$ and is superharmonic on $D$ satisfying 
			$$\phi^* (1)=+\infty, \phi_* (1)=0 \Rightarrow E_\phi=\{1\}.$$
			Moreover, $\phi$ admits $\psi:=\chi_2 (h)$ as a strong superharmonic  majorant. 
			Thus, by Theorem \ref{main2} (ii), there exists a unique quasi upper bounded harmonic function $u$ on $\mathbb D$ with the right boundary values $\phi$ along $\pa \mathbb D \setminus \{1\}.$
			
			\n 
			(ii) We now construct an explicit solution of the Dirichlet problem in high dimension.
			For $\al \in (0,1)$ consider the ellipsoid
			$$\Om_\al:= \{(z, w): |z|^2+ |w|^{1/\alpha} <1\}.$$
			It is easy to check that $\Om_\al$ is a hyperconvex Reinhardt domain whose boundary contains no complex variety of positive dimension. So, by Proposition 2.1 in \cite{DDH}, $\Om_\al$ is $B-$regular. Now we set 
			$$\begin{aligned} 
				\phi_\al (z,w)&:= h(z)^\alpha\\ 
				u_\al (z,w)&:=\frac{|w|}{|1-z|^\al},\ \forall (z, w)\in \mathbb{B}.
			\end{aligned}
			$$
			\n 
			Then, since $\Om_\al \subset \mathbb D \times \C$, by (i), $\phi$ is superharmonic on $\Om_\al$ admitting a strong majorant on $\ov{\Om_\al}.$
			On the other hand,
			{\it locally} on $\Om_\al, u_\al$ is the modulus of a holomorphic function, hence $u_\al \in PSH(\Om_\al).$
			Moreover, since the restriction of $u_\al$ on each complex disk $w=c(1-z)^\alpha, c \in \C$ is constant, we infer that
			$u_\al$ is maximal on $\Om_\al.$ So, by Theorem \ref{main2} (ii),
			$u_\al$ is the unique quasi upper bounded maximal plurisubharmonic function on $\Om_\al$ having the boundary values  $\va$ on $\pa \Om_\al \setminus \{(1,0)\}.$
			
			\n 
			(iii) If $\phi$ does not assume to have a strong majorant then the uniqueness of the solution may fail.
			Indeed, following \cite{NiWi}, we consider on the unit ball $\Omega=\mathbb{B} \subset \mathbb{C}^2$ the function
			$$
			\phi(z,w)=\frac{1-|z|^2}{|1-z|^2},\  (z,w)\in \mathbb{B}.
			$$
			Then obviously $P_{\phi^*}=\phi$ on $\mathbb{B}.$
			Notice that $E_\phi=\{(1,0)\}.$
			On the other hand
			$$
			V(z,w)=\frac{|w|^2}{|1-z|^2},\  (z,w)\in \mathbb{B}
			$$
			is also maximal plurisubharmonic in $\mathbb{B}$ and  $V=P_{\phi^*}$ on $\partial \mathbb{B}\setminus \widehat{E_\phi}$. Nevertheless,  $V < P_{\phi^*}$ everywhere on $\mathbb{B}$.
			
			\n 
			(iv) 
			Consider the function
			$\va= 0$ on $\pa \mathbb D \setminus \{1\}$ and $\va (1)=+\infty.$ Then there exists no harmonic function on $\mathbb D$ having boundary values equal to $\phi$ everywhere on $\pa \mathbb D.$
			For otherwise, by the minimum principle, such a solution $u$ must be positive on $\mathbb D.$ By Poisson-Herglotz integral formula we may write
			$$u(z)=\int_{\pa \mathbb D} \fr{1-|z|^2}{|z-\xi|^2} d\mu(\xi),$$
			where $\mu$ is a positive finite measure on $\pa \mathbb D.$ Since $u$ tends to $0$ at every boundary point different from $1$, we see that $\mu$ has no atom on $\pa \mathbb D \setminus \{1\}.$ Furthermore, using Radon-Nikodym theorem, we can check that $\mu$ equals to its singular part $\mu_s$  with respect to $d\sigma,$ the Lebesgue measure on $\partial \mathbb D.$ Nevertheless, using Lusin-Privalov theorem about boundary values of Poisson integral with respect to singular measures, we see that $\mu_s$ may concentrate only at $1$. Thus $\mu$ is a multiple of the Dirac mass $\delta_1.$
			So $$u(z)=c\fr{1-|z|^2}{|z-1|^2}$$ for a constant $c>0.$ This is absurd since, in this case, $u(z) \to 0 $ as $z \to 1$ tangentially. 		
		}
		
	\end{remark}




\begin{thebibliography}{00}
		\bibitem{BT} E. Bedford and B. A. Taylor, {\it  The Dirichlet problem for a complex Monge–Amp\`ere equation}, Invent. Math., {\bf 37}(1976), 1--44.
		\bibitem  {Blocki} Z. Blocki,   The complex Monge-Ampere Operator in Pluripotential Theory, Lectures notes., 1998 ( unpublish). Website: www:/gamma.im.uj.edu.pl/ $\sim$ Blocki.
		\bibitem{Br59} H. J. Bremermann,  \textit{On a generalized Dirichlet problem for plurisubharmonic functions and pseudo-convex domains. Characterization of \v Silov boundaries}, Trans. Amer. Math. Soc., \textbf{91} (1959), 246-276.
		\bibitem{DF}	K. Diederich, J.E. Fornaess, {\it 
			Pseudoconvex domains: an example with nontrivial Nebenhülle},
		Math. Ann., 0025-5831, 225 (3) (1977), pp. 275-292.
		\bibitem{DDH} N.Q. Dieu, N.T. Dung and D.H. Hung, {\em $B$-regularity of certain domains in $\mathbb C^n$,} 
		Annales Polon. Math. 86(2) (2005), 137-152.
		\bibitem{DLS} N.Q. Dieu, T.V. Long and S. Ounhean, {\it Approximation of plurisubharmonic functions
			on complex varieties}, International Journal of Mathematics
		Vol. 28, No. 14 (2017) 1750107.	
		\bibitem{DiW} N.Q. Dieu and F. Wikstr\"om, {\it Jensen measures and approximation of plurisubharmonic functions,}
		Michigan Math. J. 53 (2005), 529-544.
		\bibitem {DW} I. K. Djire, J. Wiegerinck, Characterizations of boundary pluripolar hulls, Complex Variables and Elliptic Equations, 61(8) (2016), pp. 1133–1144.
		\bibitem{FN} J. Fornæss and R. Narasimhan, {\em The Levi problem on complex spaces with singularities}, Math. Ann., 248 (1980), 47–72.
			\bibitem{FW}  J.  Fornæss and J. Wiegerinck, {\it Approximation of plurisubharmonic functions}, Arkiv f\"or
			Mat. 27 (1989), 257-272.
		\bibitem{LHL} Long, T.V, Hong,  N.X, Lieu, P.T, {\em Continuity of the Perron–Bremermann envelope of plurisubharmonic functions},  Collect. Math. 75, 659–670 (2024). https://doi.org/10.1007/s13348-023-00405-9.
		\bibitem{LS} F. L\'arusson and R. Sigurdsson, {\it Plurisubharmonic extremal functions, Lelong numbers and
			coherent ideal sheaves}, Indiana Univ. Math. J. 48 (1999), pp. 1513–1534.
		\bibitem{Ng} Nguyen Quang Dieu, {\it Continuity of the pluricomplex Green functions with poles along a hypersurface}, Ark. Mat. 86 (2005) pp. 231–250.
		\bibitem{NiWi} M. Nilsson and F. Wikstr\"om, {\em Quasibounded plurisubharmonic functions}, Internat. J. Math., Vol.
		32, No. 9 (2021).
		\bibitem{Ni} M. Nilsson, {\em Continuity of envelopes of unbounded plurisubharmonic functions}, Math. Z., {\bf 301} (2022),  3959--3971.
		\bibitem{Pa} M. Parreau, {\em Sur les moyennes des fonctions harmoniques et analytiques et la classification
			des surfaces de Riemann}, Ann. Inst. Fourier, 3 (1951), pp. 103–197.
		\bibitem{Si} N. Sibony,  {\em Une classe de domaines pseudoconvexes}, Duke Math. J. 55 (1987), 299-
		319. 
		\bibitem{Tong} H. Tong, {\em Some characterizations of normal and perfectly normal spaces},  Duke Math. J. 19 (1952), 289-292. 
		\bibitem{Wal} J. B. Walsh, {\em  Continuity of envelopes of plurisubharmonic functions}, J. Math. Mech. {\bf 18} (1968/1969) 143--148.
		\bibitem{Wi} F. Wikstr\"om, {\it Jensen measures and boundary values of plurisubharmonic functions}, Ark.
		Mat., 39 (2001), pp. 181–200.
		
		
		
	\end{thebibliography}
\end{document}